\newtheorem{teo}{Theorem}[section] \newtheorem{cor}[teo]{Corollary}
\newtheorem{lem}[teo]{Lemma}
\newtheorem{ex}[teo]{Example}
\newtheorem{pro}[teo]{Proposition}
\newtheorem{rem}[teo]{Remark}
\newtheorem*{thmA}{Theorem A}
\newtheorem*{thmB}{Theorem B}
\newcommand{\R}{\mathbb R}
\newcommand{\C}{\mathbb C}
\newcommand{\N}{\mathbb N}
\newcommand{\spec}{\operatorname{{Spec}}}
\newcommand{\codim}{\operatorname{{codim}}}
\newcommand{\dg}{\operatorname{{degree}}}
\newcommand{\vazio}{\varnothing}
\newcommand{\eps}{\varepsilon}
\newcommand{\F}{\mathcal{F}}
\newcommand{\G}{\mathcal{G}}
\newcommand{\el}{\ell}
\newenvironment{dem}
  {\noindent \emph{Proof.}}{$\textrm{\hspace{2mm}} \Box$
  \newline}
\title{On a weak Jelonek's real Jacobian Conjecture in $\R^n$}
\author{Alexandre Fernandes}
\author{Carlos Maquera}
\author{Jean Venato Santos}
\thanks{The first author was supported by CNPq Grant 300685/2008-4, Brazil.}
\thanks{The second author was supported by CNPq and FAPESP Grant 2009/17493-4, Brazil.}
\thanks{The third author was partially supported by CAPES Grant 4441/08-7, Brazil}
\address{Departamento de Matem\'atica,
Universidade Federal do Cear\'a,  13560-970, Fortaleza-CE,
Brazil.} \email{alexandre.fernandes@ufc.br (A. Fernandes)}
\address{Departamento de Matem\'atica,
Instituto de Ci\^encias Matem\'aticas e de
Computa\c{c}\~{a}o, Universidade de S\~ao Paulo - Campus de
S\~{a}o Carlos, Caixa Postal 668, 13560-970, S\~ao Carlos-SP,
Brazil.} \email{cmaquera@icmc.usp.br (C. Maquera)}
\address{Faculdade de Matemática, Universidade Federal de Uberlândia,
38408-100, Uberlândia-MG, Brazil.}
\email{jvenatos@famat.ufu.br (J. Venato Santos)}
\begin{document}
\begin{abstract}
Let $Y:\R^n\to\R^n$ be a polynomial local diffeomorphism and
let $S_Y$ denote the set of not proper points of $Y$. The
Jelonek's real Jacobian Conjecture states that if
$\codim(S_Y)\geq2$, then $Y$ is bijective. We prove a weak
version of such conjecture establishing the sufficiency of a
necessary condition for bijectivity. Furthermore, we
generalize our result on bijectivity to semialgebraic local
diffeomorphisms.
\end{abstract}
\maketitle

\section{Introduction}

In this paper we are interested in finding conditions on a
local diffeomorphism for this to be a global diffeomorphism.
For this, in some cases (for example in the case of
polynomial maps of $\mathbb{R}^n$) it is sufficient the map
to be globally injective. There is an extensive literature,
and in different contexts, on the problem of global
injectivity (see \cite{CGL}, \cite{FGR-n}, \cite{FGR},
\cite{G}, \cite{CGL}, \cite{CJLT}, \cite{GS}, \cite{GM},
\cite{NX}, \cite{B-1} and \cite{B-2}, for instance). In
particular, we put emphasis on two type of conditions that
imply global injectivity as it will be explained below.

\vspace{.3cm}
\noindent
 {\bf I. The spectral condition}: Let $Y:\mathbb{R}^2 \to \mathbb{R}^2$ be a differentiable  map and let $\textrm{Spec}(Y)$ be the set of
 eigenvalues of the derivative $DY_p$ when $p$ varies in $\mathbb{R}^n$
 . In \cite{FGR} Fernandes-Gutierrez-Rabanal  showed that:
If $\textrm{Spec}(Y)\cap [0, \varepsilon)=\emptyset,$ for some $\varepsilon >0,$ then $Y$ is injective. This result is an improvement of several results obtained previously by Gutierrez et al (see \cite{G}, \cite{CGL} and \cite{GS}).
As a consequence of this result the authors also prove the global asymptotic stability conjecture for differentiable vector fields of $\mathbb{R}^2$:  If $\textrm{Spec}(Y)\subset \{z\in \mathbb{C}: \  \textrm{Re}(z) < 0\}$, then for all $p$ in $\mathbb{R}^2$, there is a unique positive trajectory starting at $p$  whose  $\omega$-limit set is exactly $\{0\}$.

 \vspace{.3cm}
 \noindent
 {\bf II. The set of points where a polynomial map is not proper}: First of all we introduce some concepts
and results. Let $Y:M\to N$ be a continuous map of locally
compact spaces. We say that the mapping $Y$ is \textit{not
proper at a point} $y\in N$, if there is no neighborhood $U$
of the point $y$ such that the set $Y^{-1}(\overline{U})$ is
compact. The set $S_Y$ of points at which the map $Y$ is not
proper indicates how the map $Y$ differs from a proper map.
In particular, $Y$ is proper if and only if this set is empty.
Moreover, if $Y(M)$ is open, then $S_Y$ contains the border
of $Y(M)$. The set $S_Y$ is the minimal set $S$ with the
property that the mapping $Y:M\setminus Y^{-1}(S)\to
N\setminus S$ is proper.

In \cite{J}, Jelonek stated the following conjecture for
polynomial maps of $\mathbb{R}^n$:

\vspace{.3cm}
\noindent
\textbf{Jelonek's Real Jacobian Conjecture:}
\textit{
Let
$Y:\R^n\to\R^n$ be a polynomial local diffeomorphism. If
$\codim(S_Y)\geq2$ then $Y$ is bijective.
}

Such conjecture is closely connected with the following
famous Keller Jacobian Conjecture:

\vspace{.3cm}
\noindent
\textbf{Jacobian Conjecture:}
\textit{
Let $Y:\C^n\to\C^n$ be a
polynomial map with nonzero Jacobian determinant everywhere,
then $Y$ is an isomorphism.
}

In fact, Jelonek proved in \cite{J} that if his Real Jacobian
Conjecture in dimension $2n$ is true then so is the Jacobian
Conjecture in (complex) dimension $n$.

Jelonek proved in \cite{J} that: if $Y:\R^n\to\R^n$ is a real
polynomial mapping with nonzero Jacobian everywhere and
$\codim(S_Y)\geq3$, then $Y$ is a bijection (and consequently
$S_Y=\vazio$). Furthermore, he proved that his conjecture is
true in dimension two. This implies that in the not injective
polynomial mapping $P:\R^2\to\R^2$ with nonzero Jacobian
determinant everywhere given by Pinchuk in \cite{Pi}, the
codimension of $S_P$ is equal 1. Based on this example we
have:

\begin{ex}
\label{ex:Pinchuk} The polynomial local diffeomorphism
$Y:\R^n\to\R^n$, $n\geq3$, defined by
$Y(x_1,\ldots,x_n)=(P(x_1,x_2),x_3,\ldots,x_n)$, is not
injective and $\codim(S_Y)=1$.
\end{ex}

Hence the only interesting case is that of $\codim(S_Y)=2$.
In this direction, recently Gutierrez and Maquera proved in
\cite{GM} the following weaker version: if $Y:\R^3\to\R^3$ is
a polynomial map such $\spec(Y)\cap[0,\eps)=\vazio$, for some
$\eps>0$, and $\codim(S_Y)\geq2$, then $Y$ is bijective.

\vspace{.3cm}
\noindent
 {\bf Our results}: Let  $Y=(f_1,\ldots,f_n):\R^n\to\R^n$ be a $C^2$ map such
 that $DY_p$ is non-singular for all $p\in\R^n$, (that is, $Y$ is a
local diffeomorphism) then it follows from the inverse
function theorem that: for each $i\in\{1,\ldots,n\}$, the
level surfaces $\{f_i=constant\}$ make up a codimension one
$C^2$-foliation $\F_i$ on $\R^n$. More generally, given a
$k$-combination $\{i_1,\ldots,i_k\}$ from $\{1,\ldots,n\}$,
the foliations $\F_{i_1},\ldots,\F_{i_k}$ are pairwise
transverse and the intersection
$\F_{i_1}\cap\ldots\cap\F_{i_k}$ is a $C^2$-foliation
$\F_{i_1\ldots i_k}$ of codimension $k$ on $\R^n$.

The key point in the arguments of global injectivity for maps
of $\mathbb{R}^2$ is the study of the foliations defined by
the coordinates maps. In the bidimensional case the
complexity of these foliations is given by the existence of
Reeb components (all leaves have the same topological type:
are lines). But the condition on the spectrum eliminates this
possibility.

In the higher dimensional case, that is when
$Y=(f_1,\ldots,f_n):\R^n\to\R^n$ is a $C^2$ map, the
foliations defined by the coordinates $f_i$ can be, a priori,
very complicated. In the result of Gutierrez and Maquera
mentioned above, the condition on the spectrum of $Y$
guarantees that these foliations are by planes, that is, are
homeomorphic to $\mathbb{R}^2. $ This was fundamental!

Notice that $\F_i$ has no compact leaves, for all
$i=1,\ldots,n$. In fact, otherwise the complement of a
compact leaf contains a bounded connected component $N$.
Then, either $\inf\{f_i(p);p\in\bar N\}$ or
$\sup\{f_i(p);p\in\bar N\}$ is a critical value for
$f_i:\R^n\to\R$, which contradicts the fact that this
function is a submersion. By using classical arguments of the
theory of foliations, we obtain our first result in
Proposition \ref{pro:compact} that generalizes this phenomena
to the intersected foliations $\F_{i_1\ldots i_k}$.

Another interesting property on such foliations is:
\begin{rem}\label{rem:necessary} If
$Y=(f_1,\ldots,f_n):\R^n\to\R^n$ is an injective
polynomial local diffeomorphism, then the leaves in
$\F_{i_1\ldots i_{n-2}}$ are simply connected.
\end{rem}
In fact, by a result of Bialynicki-Birula and Rosenlicht (see
Theorem \ref{teo:BR}), $Y$ is a global diffeomorphism onto
$\R^n$. Now given an ordered $(n-2)$-combination
$\{i_1,\ldots,i_{n-2}\}$ of $\{1,\ldots,n\}$, let $\{a,b\}$
be equal to $\{1,\ldots,n\}\setminus\{i_1,\ldots,i_{n-2}\}$
and note that the leaves of $\F_{i_1\ldots i_{n-2}}$ are
given by $Y^{-1}(L)$ where
$$L=\{c_{i_1},\ldots,c_{a-1},\R,c_{a+1},\ldots,c_{b-1},\R,c_{b+1},\ldots,c_{i_{n-2}}\},$$
for $c_{i_1},\ldots,c_{i_{n-2}}\in\R$. Since, in this case,
$Y^{-1}$ is a diffeomorphism we may conclude that the leaves
in $\F_{i_1\ldots i_{n-2}}$ are simply connected.

So this condition on such leaves is necessary to the
bijectivity of $Y$, our next result shows that this condition
together with $\codim(S_Y)\geq2$ is also sufficient.

Related to the Jelonek's Conjecture, in the Section
\ref{sec:poly}, we prove the following

\begin{thmA}
\label{teo:injectivityn} Let $Y=(f_1,\ldots,f_n):\R^n\to\R^n$
be a polynomial local diffeomorphism. Then, the leaves of
$\F_{i_1\ldots i_{n-2}}$ are simply connected, for all
$(n-2)$-combination $\{i_1,\ldots,i_{n-2}\}$ of
$\{1,\ldots,n\}$ and $\codim(S_Y)\geq 2$ if, and only if, $Y$
is a bijection.
\end{thmA}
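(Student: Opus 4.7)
The \emph{only if} direction is immediate: Remark \ref{rem:necessary} handles the leaf condition, while a bijective polynomial local diffeomorphism is a homeomorphism (an open continuous bijection is a homeomorphism) and hence proper, so $S_Y=\vazio$ and $\codim(S_Y)\geq 2$ holds vacuously.

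For the \emph{if} direction, the plan is to prove injectivity of $Y$, which by Theorem \ref{teo:BR} of Bialynicki-Birula and Rosenlicht forces $Y$ to be bijective. Suppose for contradiction that $Y(p)=Y(q)$ for distinct $p,q\in\R^n$. Since $f_i(p)=f_i(q)$ for every $i$, both points lie in each common level set $\{f_{i_j}=c_{i_j}\}$; the first subtlety is to find an $(n-2)$-combination $\{i_1,\ldots,i_{n-2}\}$ for which $p$ and $q$ lie in the \emph{same} leaf $L$ of $\F_{i_1\ldots i_{n-2}}$, since the common level set may a priori be disconnected. By hypothesis $L$ is simply connected; by Proposition \ref{pro:compact} it is non-compact; being a connected $2$-manifold, $L$ is therefore homeomorphic to $\R^2$.

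Setting $\{a,b\}=\{1,\ldots,n\}\setminus\{i_1,\ldots,i_{n-2}\}$, the restriction $\phi=(f_a,f_b)|_L\colon L\to\R^2$ is a local diffeomorphism with $\phi(p)=\phi(q)$. The goal is to prove $\phi$ is proper: once this is established, the covering space theorem gives that $\phi$ is a covering of $\R^2$ by $\R^2$, hence a diffeomorphism, contradicting $\phi(p)=\phi(q)$. Properness is equivalent to $S_\phi=\vazio$, and here the key point is that $L$ is closed in $\R^n$ (a connected component of a level set of the submersion $(f_{i_1},\ldots,f_{i_{n-2}})$): any sequence $(p_k)\subset L$ with $\phi(p_k)\to y_0\in S_\phi$ must then satisfy $|p_k|\to\infty$, so $Y(p_k)$ converges to a point $y_0^{*}$ lying on the affine $2$-plane $P_c=\{y_{i_j}=c_{i_j}\}$, with $y_0^{*}\in S_Y$. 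Hence $S_\phi$ injects into $S_Y\cap P_c$.

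The crux, and main obstacle, is to deduce $S_Y\cap P_c=\vazio$ from $\codim(S_Y)\geq 2$. Since $S_Y$ is a real algebraic set of codimension at least $2$, a \emph{generic} affine $2$-plane meets it only in a $0$-dimensional set, but the specific plane $P_c$ forced on us by the choice of $L$ need not be generic. I would try to sidestep this by exploiting the simple-connectedness hypothesis for \emph{every} $(n-2)$-combination: a failure of properness of $\phi|_L$ at a single leaf would, by continuity of the foliation and of $Y$, propagate to nearby leaves (obtained by perturbing the constants $c_{i_j}$), producing a whole $(n-1)$-parameter family of points in $S_Y$ and thus violating $\codim(S_Y)\geq 2$. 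With $\phi$ proper, the covering argument closes the loop and yields $p=q$, the desired contradiction.
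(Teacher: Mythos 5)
Your overall strategy --- prove injectivity directly by locating two preimages of the same point on a single leaf $L$ of some $\F_{i_1\ldots i_{n-2}}$ and then showing $(f_a,f_b)|_L$ is a proper covering of $\R^2$ by $\R^2$ --- is not the paper's, and the two places you yourself flag as the ``subtlety'' and the ``crux'' are genuine gaps that your sketch does not close. First, there is no argument that some $(n-2)$-combination puts $p$ and $q$ on the \emph{same} leaf: the common level set $\bigcap_j f_{i_j}^{-1}(c_{i_j})$ can be disconnected, and $Y(p)=Y(q)$ only forces $p,q$ into the same level set, not the same component; nothing in the hypotheses obviously rules out that $p$ and $q$ sit in different components for \emph{every} combination. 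Second, and more seriously, your proposed resolution of the properness step fails quantitatively. The leaves of $\F_{i_1\ldots i_{n-2}}$ obtained by perturbing the level values form an $(n-2)$-parameter family, so the corresponding affine $2$-planes $P_c$ form an $(n-2)$-parameter family (not $(n-1)$ as you claim); even if non-properness of $\phi|_L$ propagated to all nearby leaves and produced one point of $S_Y$ on each nearby $P_c$, the resulting subset of $S_Y$ would have dimension at most $n-2$, which is exactly what $\codim(S_Y)=2$ permits. No contradiction results, so $S_\phi=\vazio$ is not established, and the covering argument never gets off the ground. (Your ``only if'' direction is fine and matches Remark \ref{rem:necessary}.)

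For contrast, the paper never tries to compare two preimages on one leaf. It argues by contradiction on $S_Y$ itself: if $Y$ is not bijective, Theorem \ref{teo:J1} forces $\codim(S_Y)=2$, and Jelonek's curve-selection statement (Theorem \ref{teo:J2}) together with a linear change of coordinates (Lemma \ref{lem:isomorphism}) lets one place a point $p\in S_Y$ on an analytic arc transverse to a hyperplane. One then builds small $2$-discs $D(a_1,t_2,\ldots,t_{n-2})$ whose boundary circles miss $S_Y$; the preimages of these circles lie in leaves of $\F_{1\ldots n-2}$, which by hypothesis and Proposition \ref{pro:compact} are planes, so each preimage circle bounds a compact disc in its leaf. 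Using the Dru\.{z}kowski--Tutaj bound (Theorem \ref{teo:finite}) and local constancy of $\#Y^{-1}$ at proper points, the preimage of each small disc is a union of $k$ compact discs, and iterating the construction over the remaining $n-2$ parameters assembles a compact neighborhood $W$ of $p$ with $Y^{-1}(W)$ compact --- contradicting $p\in S_Y$. The simple-connectedness hypothesis is thus used only through the ``leaves are planes, so circles bound discs'' step, applied to preimages of circles in the \emph{target}, which sidesteps both of the obstacles your approach runs into. If you want to salvage your route, you would need a genuinely new idea for both gaps; as written the proof does not go through.
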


This result extends the three dimensional result of Gutierrez
and Maquera \cite{GM} to the $n-$dimensional case. Note that
the Example \ref{ex:Pinchuk} shows that we can not consider
just the conditions on the foliations $\F_{i_1\ldots
i_{n-2}}$ in Theorem A.

In Section \ref{sec:sa}, we discuss about the more general
case where $Y$ is a semialgebraic mapping. We stress some
differences with the polynomial case, the most important is
that the injectivity does not implies, in general,
surjectivity. However, we obtain the following stronger
version on bijectivity:

\begin{thmB}
\label{teo:injectivitynsa} Let
$Y=(f_1,\ldots,f_n):\R^n\to\R^n$ be a $C^2$ semialgebraic
local diffeomorphism such that for all $(n-2)$-combination
$\{i_1,\ldots,i_{n-2}\}$ of $\{1,\ldots,n\}$ the leaves of
$\F_{i_1\ldots i_{n-2}}$ are simply connected. If
$\codim(S_Y)\geq 2$ then $Y$ is a bijection.
\end{thmB}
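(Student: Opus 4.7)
The plan is to prove injectivity and surjectivity of $Y$ separately. For injectivity, I would invoke the same foliation-based argument used to prove Theorem A: the polynomiality of $Y$ enters that argument only at the very end, when the Bialynicki-Birula and Rosenlicht result is used to upgrade injectivity to bijectivity. The preceding injectivity step uses only that $Y$ is a $C^2$ local diffeomorphism, that the leaves of $\F_{i_1\ldots i_{n-2}}$ are simply connected, and that $\codim(S_Y)\geq 2$; semialgebraic tameness of $Y$ supplies the genericity and finiteness that polynomial tameness supplied before (for example, that generic 2-planes meet $S_Y$ in a discrete set, so that generic leaves of $\F_{i_1\ldots i_{n-2}}$ meet $Y^{-1}(S_Y)$ in a discrete set). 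The restrictions $Y|_L\colon L\to P$ from a 2-dimensional leaf $L$ into the corresponding 2-plane $P$ then become injective thanks to the simple connectedness of $L$, and assembling these statements across all $(n-2)$-combinations yields the global injectivity of $Y$.

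Granted injectivity, I would next show that $C:=\R^n\setminus Y(\R^n)$ lies in $S_Y$. Since $\codim(S_Y)\geq 2$ and $n\geq 2$, the set $\R^n\setminus S_Y$ is connected. The intersection $Y(\R^n)\cap(\R^n\setminus S_Y)$ is open there and it is also closed: any limit point of $Y(\R^n)$ not itself in $Y(\R^n)$ must belong to $\partial Y(\R^n)\subseteq S_Y$. Being clopen and non-empty in a connected space, it equals $\R^n\setminus S_Y$. Therefore $\R^n\setminus S_Y\subseteq Y(\R^n)$, i.e., $C\subseteq S_Y$ is a closed semialgebraic set of codimension at least $2$.

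It remains to show that $C=\vazio$. By injectivity, $Y\colon\R^n\to\R^n\setminus C$ is a homeomorphism. Suppose, for contradiction, that $C\neq\vazio$; using the Whitney stratification of the semialgebraic set $C$, select a smooth point $q_0\in C$ lying in a stratum of maximal dimension $k\leq n-2$. A small $(n-k)$-dimensional disk $\Delta\subset\R^n$ transverse to $C$ at $q_0$ then intersects $C$ only at $q_0$, so its boundary sphere $\partial\Delta\cong S^{n-k-1}$ is contained in $\R^n\setminus C$ and, by the classical linking argument with $C$ at $q_0$, represents a non-trivial class in $H_{n-k-1}(\R^n\setminus C;\Z)$. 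This contradicts the triviality of the reduced homology of $\R^n$, forcing $C=\vazio$ and hence $Y(\R^n)=\R^n$.

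I expect the main obstacle to lie in the injectivity step: the proof of Theorem A must be re-read carefully to confirm that polynomiality is invoked nowhere except in the final bijectivity upgrade, and that its finitary ingredients admit the natural semialgebraic analogues (which should follow from semialgebraic triangulation and the cell-decomposition theorem). Once injectivity is in hand, the surjectivity step is essentially forced by the codimension hypothesis on $S_Y$ combined with the topological rigidity of $\R^n$, and the only subtlety there is the choice of the smooth stratum point of $C$, which is handled by Whitney stratification.
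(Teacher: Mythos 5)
Your proposal has two genuine gaps, one in each half. First, the injectivity step. You describe the proof of Theorem A as ``a foliation argument for injectivity plus a Bialynicki-Birula--Rosenlicht upgrade at the end'', but that is not its structure, and the structure matters here. The paper's argument for Theorem A (and, mutatis mutandis, for Theorem B) never establishes injectivity leaf by leaf; it is a \emph{counting} argument. One shows that the fibre-counting function $\#Y^{-1}(\cdot)$, which is constant, say $=k$, on the connected set $\R^n\setminus S_Y$, is also equal to $k$ at an arbitrary point $p\in S_Y$: one takes a small circle $C$ around $p$ inside a $2$-plane chosen (by transversality) to miss $S_Y$, pulls it back to $k$ circles lying in planar leaves of $\F_{1\ldots n-2}$, and caps those by discs whose images cover the disc bounded by $C$. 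The local constancy of $\#Y^{-1}$ at $p$ then contradicts Corollary \ref{cor:localconstante}, forcing $S_Y=\varnothing$. Your proposed replacement --- ``the restrictions $Y|_L\colon L\to P$ are injective by simple connectedness, and these assemble into global injectivity'' --- does not work: if $Y(p)=Y(q)$ with $p\neq q$, then $p$ and $q$ lie in the same level set of $(f_{i_1},\dots,f_{i_{n-2}})$ but in general in \emph{different connected components} of it, i.e.\ in different leaves of $\F_{i_1\ldots i_{n-2}}$, so leafwise injectivity says nothing about them. The multiplicity of leaves sitting over a single $2$-plane is exactly the difficulty the counting argument is built to handle; it yields $\#Y^{-1}\equiv k$, not $k=1$, and bijectivity only comes afterwards from properness.

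Second, the surjectivity step. Your reduction $C:=\R^n\setminus Y(\R^n)\subseteq S_Y$ via the clopen argument is correct, but the linking argument meant to kill $C$ is false as stated. A closed semialgebraic set of codimension at least $2$ can have a complement with trivial reduced homology: for a closed half-line $C\subset\R^3$ the complement is contractible, and the small circle transverse to the top-dimensional stratum at an interior point is null-homotopic in $\R^3\setminus C$ (slide it off the endpoint). The classical linking argument requires $C$ to be, globally, a properly embedded closed submanifold; a Whitney stratified set with lower-dimensional strata around which spheres can escape does not qualify, and nothing in your setup rules this out. The paper avoids the issue entirely: having proved $S_Y=\varnothing$ by the counting argument, $Y$ is a proper local diffeomorphism of $\R^n$, hence a covering map onto the simply connected space $\R^n$, hence a bijection (the paper invokes Hadamard's theorem). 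Your correct observations --- that Hardt's theorem, Tarski--Seidenberg, and transversality supply the semialgebraic analogues of the polynomial finiteness ingredients --- are indeed the right supporting cast, and the paper sets them up as Corollaries \ref{cor:finitesa} and \ref{cor:localconstante} and Propositions \ref{prop:sa} and \ref{pro:trans}; but they must feed the counting argument, not the two-step injectivity-then-surjectivity plan you outline.
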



\section{Foliations and local diffeomorphisms}
\label{sec:FLC}

A fundamental notion to understand the structure of a
foliation is the concept of \emph{minimal set}, which is a
nonempty compact set given by an union of leaves and having
no proper subset satisfying these conditions. We will need
the next result due to Sacksteder \cite{S} which relates the
existence of minimal sets, distinct from compact leaves, with
not trivial holonomy for the foliation.

\begin{teo}
\label{teo:Sacksteder} Suppose that $M$ is a minimal set of a
codimension one foliation of class $C^2$ which is neither a
single compact leaf nor the entire manifold. Then for some
leaf in $M$ there is an element in its holonomy group whose
derivative (at the fixed point corresponding to the leaf) has
absolute value $< 1$. In particular, the leaf has nontrivial
holonomy and fundamental groups.
\end{teo}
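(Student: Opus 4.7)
My plan is to follow Sacksteder's original strategy, producing a contracting holonomy element via a bounded-distortion argument that crucially exploits the $C^2$ hypothesis. First I would invoke the structure theorem for minimal sets of codimension-one foliations: such an $M$ is either a compact leaf, the entire manifold, or an \emph{exceptional} minimal set, meaning that its intersection with any small transversal is a Cantor set. Since by hypothesis the first two alternatives are excluded, for any small transversal $T\cong\R$ meeting $M$ the intersection $K:=M\cap T$ is a Cantor set. I then fix a finite foliated atlas and consider the finitely generated pseudogroup $\mathcal{H}$ of $C^2$ local diffeomorphisms of transversals obtained from holonomy between plaques. By minimality of $M$, every $\mathcal{H}$-orbit inside $K$ is dense in $K$; in particular, every $p\in K$ admits compositions $h_n=g_{i_n}\circ\cdots\circ g_{i_1}$ of generators with $h_n(p)\in K$ and $h_n(p)\to p$.

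The main step is to upgrade such a recurrence to a fixed point $p\in K$ of some $h\in\mathcal{H}$ with $|h'(p)|<1$. The tool is a Denjoy/Schwarz-type distortion estimate, which the $C^2$ assumption provides: there is a constant $C>0$, depending only on the chosen generating set, such that for every admissible composition $h=g_{i_n}\circ\cdots\circ g_{i_1}$ and every interval $J$ in its domain,
$$
\sup_{x\in J}\log|h'(x)|\;-\;\inf_{x\in J}\log|h'(x)|\;\leq\; C\sum_{k=0}^{n-1}\bigl|(g_{i_k}\circ\cdots\circ g_{i_1})(J)\bigr|,
$$
where $|\cdot|$ denotes length on $T$. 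This estimate is the single place where $C^2$ regularity, rather than $C^1$, is essential.

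The key combinatorial step is to choose the word $h_n$ coming from a return of $p\in K$ close to itself in such a way that the intermediate images $(g_{i_k}\circ\cdots\circ g_{i_1})(J)$ of a short interval $J$ around $p$ are pairwise disjoint in $T$. Then the right-hand side above is bounded by $|T|$, giving uniform bounded distortion for $h_n$ on $J$. A pigeonhole count on the number of pairwise disjoint images in $T$ of comparable length then forces $|h_n'|$ to be bounded away from $1$ on $J$ for some $n$; if in addition the return is close enough, a Brouwer-type argument on $J$ yields a fixed point $p\in K$ of $h_n$ with $|h_n'(p)|<1$. Since holonomy factors through $\pi_1$ of the leaf $L$ through $p$, this nontrivial contracting element forces both the holonomy group and the fundamental group of $L$ to be nontrivial, proving the \emph{in particular} clause.

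The main obstacle is precisely the bookkeeping in the previous paragraph: one must arrange pairwise disjointness of the intermediate images of $J$ (so that the distortion estimate is useful) while simultaneously achieving a genuine return of $p$ to a point close to itself. This is the technical heart of Sacksteder's argument, and it is what breaks down in the purely $C^1$ setting, where Denjoy-type counterexamples show that exceptional minimal sets with only elliptic holonomy do occur.
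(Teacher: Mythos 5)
This theorem is not proved in the paper at all: it is quoted verbatim from Sacksteder's article \cite{S} and used as a black box in the proof of Proposition \ref{pro:compact}. So there is no in-paper argument to compare against; your proposal has to be judged on its own terms as a proof of Sacksteder's theorem. On that score, the strategy you describe is the correct and standard one (reduction to an exceptional minimal set, the holonomy pseudogroup on a transversal, the $C^2$ Denjoy--Schwartz distortion estimate with the sum of lengths of intermediate images on the right-hand side, and the summability-plus-recurrence mechanism producing a hyperbolic fixed point), and the deduction of the \emph{in particular} clause from the fact that the holonomy group is a quotient of $\pi_1$ of the leaf is fine.

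However, as written this is a road map rather than a proof, and you say so yourself: the step you call ``the main obstacle'' --- choosing the return words $h_n$ so that the intermediate images of a fixed interval $J$ around $p$ are pairwise disjoint while simultaneously $h_n(p)\to p$ --- is precisely the content of Sacksteder's lemma, and you assert it without an argument. This is not mere bookkeeping. The standard way to resolve it is to exploit the Cantor structure of $K=M\cap T$: one works with a connected component $I$ of $T\setminus K$ (a gap) rather than an arbitrary interval around $p$; distinct pseudogroup images of a gap are automatically pairwise disjoint gaps, so $\sum_n |h_n(I)|\leq |T|<\infty$ and $|h_n(I)|\to 0$ with no combinatorial selection needed, and bounded distortion then forces $\sup_I|h_n'|\to 0$. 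The recurrence is obtained separately, by applying minimality (density of orbits in $K$) to an endpoint of $I$, and one then checks that for $n$ large $h_n$ maps a slightly enlarged interval into itself, yielding the contracting fixed point in $K$. Without this (or an equivalent) resolution of the disjointness/recurrence tension, the pigeonhole step has nothing to act on, and the proof is incomplete. A secondary, minor imprecision: the trichotomy ``compact leaf / entire manifold / exceptional with Cantor transversal'' that you invoke at the outset requires the local structure theorem for minimal sets of codimension-one foliations and implicitly uses compactness of $M$ (which the paper's definition of minimal set does guarantee); it would be worth stating that you are using it.
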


Now we are ready to prove:

\begin{pro}
\label{pro:compact} Let $Y=(f_1,\ldots,f_n):\R^n\to\R^n$ be a
$C^2$ local diffeomorphism. Then the foliations
$\F_{i_1\ldots i_k}$ has no compact leaves, where
$k\in\{1,\ldots,n-1\}$ and $\{i_1,\ldots,i_k\}$ is an
arbitrary $k$-combination from $\{1,\ldots,n\}$.
\end{pro}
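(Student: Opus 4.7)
The plan is to argue by contradiction and reduce everything to a single pointwise linear-algebra fact, without invoking Sacksteder's theorem (which presumably is stated here for use in the later main theorems). Suppose $L$ is a compact leaf of $\F_{i_1\ldots i_k}$ with $k \in \{1,\ldots,n-1\}$. Because $k<n$, I can choose an index $j \in \{1,\ldots,n\}\setminus\{i_1,\ldots,i_k\}$ and study the $C^2$ restriction $f_j|_L : L \to \R$.

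The key observation is that this restriction is itself a submersion. Indeed, at any $p\in L$ the tangent space is $T_pL = \bigcap_{a=1}^{k}\ker df_{i_a}(p)$, and since $DY(p)$ is invertible the covectors $df_1(p),\ldots,df_n(p)$ are linearly independent in $T_p^*\R^n$. Consequently $df_j(p)$ cannot lie in $\operatorname{span}\{df_{i_1}(p),\ldots,df_{i_k}(p)\}$, which is precisely the annihilator of $T_pL$; hence $d(f_j|_L)(p) = df_j(p)|_{T_pL} \neq 0$ for every $p\in L$.

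Having verified that $f_j|_L$ is a $C^2$ submersion from the compact $(n-k)$-dimensional manifold $L$ into $\R$, the contradiction is immediate: $f_j(L)\subset \R$ is simultaneously open (submersions are open maps) and compact (continuous image of a compact set), and no nonempty subset of $\R$ is both. Equivalently, the maximum of $f_j|_L$ on compact $L$ would be a critical point, contradicting the submersion property. The argument handles all $k\in\{1,\dots,n-1\}$ uniformly and, in particular, recovers the $k=1$ case already indicated in the excerpt; the only dimensional bookkeeping needed is the existence of an index $j$ outside $\{i_1,\dots,i_k\}$, which is guaranteed by $k\leq n-1$. I do not anticipate any serious obstacle.
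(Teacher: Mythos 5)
Your proof is correct, and it takes a genuinely different and much more elementary route than the paper's. The paper argues by induction on the leaf dimension $n-k$: the base case of one-dimensional leaves is handled by a general-position $2$-disc and a limit-cycle/holonomy argument, and the inductive step applies Sacksteder's theorem (Theorem \ref{teo:Sacksteder}) to a minimal set of the codimension-one foliation induced by $f_{i_k}$ on a hypothetical compact leaf in order to manufacture nontrivial holonomy. Your argument bypasses all of the foliation theory: the pointwise observation that $df_j(p)$ cannot lie in $\operatorname{span}\{df_{i_1}(p),\ldots,df_{i_k}(p)\}$, which is exactly the annihilator of $T_pL$, shows that $f_j|_L$ is a submersion, and a compact manifold of positive dimension admits no submersion into $\R$ (its image would be a nonempty open compact subset of $\R$; equivalently, a maximum point of $f_j|_L$ would be critical). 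This treats all $k\in\{1,\ldots,n-1\}$ uniformly, needs only $C^1$ rather than the $C^2$ regularity that Sacksteder's theorem requires, and is in the same spirit as the critical-value argument the introduction sketches for a single foliation $\F_i$, correctly upgraded to the intersected foliations by restricting a \emph{different} coordinate function to the leaf. The only point worth making explicit is that a compact leaf is automatically embedded (a continuous bijection from a compact space onto a Hausdorff subspace is a homeomorphism), so $f_j|_L$ is an honest $C^2$ map of manifolds with differential $df_j(p)|_{T_pL}$; this is immediate and costs nothing. For this particular proposition the paper's heavier machinery buys nothing extra, so your argument is a genuine simplification.
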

\begin{dem}
We will apply finite induction on $n-k$, the dimension of the
leaves. For $n-k=1$, suppose that $\ell$ is a compact leave
of $\F_{i_1\ldots i_{n-1}}$. Since $\ell$ is a compact
$1$-manifold it is homeomorphic to $S^1$. Note that $\ell$ is
contained in a dimension two leaf $L$ of $\F_{i_1\ldots
i_{n-2}}$. In fact, $Y$ be not singular implies that $\ell$
is a leaf of the regular foliation $\F_{i_{n-1}}|_L$ which is
induced by the submersion $f_{i_{n-1}}|_L:L\to\R$, in
particular we have that $\F_{i_{n-1}}|_L$ has trivial
holonomy. Hence, there exists a neighborhood $C$ of $\ell$ in
$L$ such that every leaf of $\F_{i_{n-1}}|_L$ passing through
a point of $C$ is homeomorphic to $S^1$ and is not homotopic
to a point in $L$.

Since $\F_{i_n}|_L$ is transversal to $\F_{i_{n-1}}|_L$, the
leaves of $\F_{i_{n}}|_L$ restricted to $C$ are curves
starting at one connected component of $\partial C$ and
ending at the other one.

Let $D$ be a smoothly immersed (in $\R^n$) open $2$-disc
containing $\ell$, which we may assume to be in general
position with respect to $\F_{i_n}$. Let $\G_n$ be the
foliation (with singularities) on $D$ which is induced by
$\F_{i_n}$. Then $\G_n$ is transverse to $\ell$.

Observe that $\G_n$ has no limit cycles, otherwise exists a
leaf of $\G_n$ which spirals towards a limit cycle $\gamma$
and the leaf of $\F_{i_n}$ containing $\gamma$ would have a
non trivial holonomy group, what is a contradiction.
Therefore the singularities of $\G_n$ can not be centers nor
saddles, which contradicts the fact that $D$ is in general
position with respect to $\F_{i_n}$. This contradiction
proves that $\ell$ is not compact and so the first step of
induction.

Now, by hypothesis of induction, suppose that the foliations
$\F_{i_1,\ldots,i_k}$ of dimension $r=n-k$ with $1<r<n-1$ has
no compact leaves. We will prove that the $(r+1)$-dimensional
foliations $\F_{i_1,\ldots,i_{k-1}}$ has also no compact
leaves.

Suppose, by contradiction, that there is a compact
$(r+1)$-dimensional leaf $L$ of $\F_{i_1\ldots i_{k-1}}$, for
some $k$-combination $\{i_1,\ldots,i_{k}\}$ from
$\{1,\ldots,n\}$. Since $Y$ is not singular, $\F_{i_{k}}|L$
is a foliation without holonomy which leaves are in the same
time leaves of the foliation $\F_{i_1\ldots i_{k}}$, in
particular they are $r$-dimensional. Hence, $\F_{i_{k}}|L$
has no compact leaves.

It is well known that any foliation on a compact manifold has
a minimal set. Since the leaves of $\F_{i_{k}}|L$ are not
compact, a minimal set $M$ of this foliation is $L$ or a
proper subset of $L$ which is not a single leaf. But, in the
first case by the transitivity of $M$ and in the second by
Theorem \ref{teo:Sacksteder}, we may conclude that the
holonomy of $\F_{i_{k}}|L$ is not trivial. This contradiction
proves that $L$ can not be compact.
\end{dem}


\section{Global injectivity: polynomial case}
\label{sec:poly}

In this section we will prove Theorem A which generalizes to
$\R^n$ the bijectivity result obtained in \cite{GM} to
polynomial local diffeomorphisms of $\R^3$ into itself. To do
this we shall need the following results due to Jelonek
\cite{J}:

\begin{teo}
\label{teo:J1} If $Y:\R^n\to\R^n$ is a polynomial local
diffeomorphism and $\codim(S_Y)\geq3$, then $Y$ is a
bijection (and consequently $S_Y=\vazio$).
\end{teo}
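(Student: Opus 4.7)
The plan is to reduce bijectivity to injectivity and then invoke the Bialynicki-Birula--Rosenlicht theorem (Theorem \ref{teo:BR}), which asserts that every injective polynomial self-map of $\R^n$ is bijective. The decisive observation is that, by the very definition of $S_Y$, the restriction
\[
Y|_{\R^n\setminus Y^{-1}(S_Y)}\colon \R^n \setminus Y^{-1}(S_Y) \longrightarrow \R^n \setminus S_Y
\]
is \emph{proper}; combined with the local-diffeomorphism hypothesis, this restriction is a proper local diffeomorphism, and therefore a covering map. The problem is thereby transferred to the topology of the base and the total space of this covering.

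Next I would exploit the assumption $\codim(S_Y) \ge 3$. The set $S_Y$ is a closed semialgebraic subset of $\R^n$, so a standard transversality argument shows that any continuous $S^1$ or $D^2$ in $\R^n$ can be perturbed to miss a semialgebraic set of codimension $\ge 3$; consequently $\R^n \setminus S_Y$ is simply connected. On the other side, since $Y$ is a local diffeomorphism, $Y^{-1}(S_Y)$ is locally diffeomorphic to pieces of $S_Y$, so its codimension in $\R^n$ is also $\ge 3$, and in particular $\R^n \setminus Y^{-1}(S_Y)$ is connected. A covering map with connected total space and simply connected base is necessarily a homeomorphism, so $Y$ is injective on $\R^n\setminus Y^{-1}(S_Y)$.

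To upgrade this to global injectivity I would argue by contradiction. Suppose $Y(p)=Y(q)$ with $p\neq q$. The local-diffeomorphism hypothesis yields disjoint open neighborhoods $U_p, U_q$ mapped diffeomorphically onto open sets $V_p, V_q$ both containing the common image; because $S_Y$ has empty interior, $V_p\cap V_q$ meets $\R^n\setminus S_Y$, producing two distinct points of $\R^n\setminus Y^{-1}(S_Y)$ with the same $Y$-image, contradicting the previous step. Hence $Y$ is globally injective, and Theorem \ref{teo:BR} delivers bijectivity. The parenthetical ``consequently $S_Y=\vazio$'' is then immediate: an injective polynomial local diffeomorphism of $\R^n$ is a diffeomorphism, hence proper.

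The hard step, in my view, is the topological input that $\R^n\setminus S_Y$ is simply connected. This is precisely where the assumption $\codim(S_Y)\ge 3$ is used essentially --- codimension $2$ would not suffice, as the complement of a line in $\R^3$ already shows --- and it also requires enough tameness of $S_Y$ to run the transversality/general-position argument, which is exactly where the semialgebraic nature of the set of non-proper points enters. Everything else in the proof is rather formal once this topological step is in hand, and it is also this step that forces ``$\codim \ge 3$'' rather than the conjectural ``$\codim \ge 2$''.
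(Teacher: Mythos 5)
The paper does not prove this statement: Theorem \ref{teo:J1} is imported verbatim from Jelonek \cite{J} and used as a black box, so there is no in-paper proof to compare against. That said, your argument is correct, and it is essentially a reconstruction of Jelonek's own covering-space proof (the paper implicitly confirms this strategy when it remarks, before Theorem \ref{teo:J1sa}, that ``following its proof in \cite{J}'' the injectivity part carries over to semialgebraic maps). The chain proper restriction $\Rightarrow$ covering map over $\R^n\setminus S_Y$ $\Rightarrow$ homeomorphism once the base is simply connected and the total space connected $\Rightarrow$ global injectivity by density $\Rightarrow$ bijectivity via Theorem \ref{teo:BR} is sound; the empty-interior trick correctly propagates any collision $Y(p)=Y(q)$ to a collision off $Y^{-1}(S_Y)$, and the final observation that a bijective local diffeomorphism is proper gives $S_Y=\vazio$. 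The one input you treat as ``the hard step'' --- that a closed semialgebraic set of codimension $\geq 3$ has simply connected complement (and codimension $\geq 2$ gives connectedness, which you need for $\R^n\setminus Y^{-1}(S_Y)$) --- is exactly Lemma \ref{lem:connected}, which the paper also quotes from \cite{J} (Lemma 8.1 there), so your sketch of it via stratification and general position is consistent with the literature. Two small points you gloss over but which are easily repaired: the restriction of $Y$ to $\R^n\setminus Y^{-1}(S_Y)$ is surjective onto $\R^n\setminus S_Y$ because a proper local homeomorphism is open and closed and the base is connected; and $Y^{-1}(S_Y)$ is closed and semialgebraic with $\dim Y^{-1}(S_Y)\leq\dim S_Y$ since $Y$ is a local diffeomorphism, which is what legitimizes applying the connectivity half of Lemma \ref{lem:connected} upstairs.
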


\begin{teo}
\label{teo:J2} Let $Y:\R^n\to\R^n$ be a non-constant
polynomial mapping. Then the set $S_Y$ is closed,
semialgebraic and for every non-empty connected component
$S\subset S_Y$ we have $1\leq\dim(S)\leq n-1$. Moreover, for
every point $q\in S_Y$ there exists a polynomial mapping
$\phi :\R\to S_Y$ such that $\phi (\R)$ is a semi-algebraic
curve passing through $q$.
\end{teo}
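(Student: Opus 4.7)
My plan is to tackle the four assertions of Theorem~\ref{teo:J2} in turn, saving the polynomial curve selection for last since it is by far the deepest point.

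Closedness of $S_Y$ follows from a diagonal argument: given $y_k\to y$ with $y_k\in S_Y$, pick $x_k$ with $|x_k|\geq k$ and $|Y(x_k)-y_k|\leq 1/k$, so that $|x_k|\to\infty$ and $Y(x_k)\to y$. For semialgebraicity, the condition $y\in S_Y$ is equivalent to
\[
\forall R>0\ \forall\eps>0\ \exists x\in\R^n:\ |x|^2\geq R\ \wedge\ |Y(x)-y|^2\leq\eps,
\]
which is a first-order formula over the real closed field, so Tarski--Seidenberg quantifier elimination identifies $S_Y$ as a semialgebraic set.

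For the dimension upper bound I would pass to an algebro-geometric compactification. Extend $Y$ to a rational map $\bar Y:\mathbb{P}^n\dashrightarrow\mathbb{P}^n$, take a resolution of indeterminacy $\pi:\widetilde X\to\mathbb{P}^n$ so that $\widetilde Y=\bar Y\circ\pi$ is regular everywhere, and write $H_\infty$ for the hyperplane at infinity of the source $\mathbb{P}^n$. By compactness of $\widetilde X$, any sequence $x_k\in\R^n$ with $|x_k|\to\infty$ has, up to a subsequence, a limit $\widetilde x\in\pi^{-1}(H_\infty)$ with $Y(x_k)\to\widetilde Y(\widetilde x)$; conversely every point of $\pi^{-1}(H_\infty)$ is approached from $\pi^{-1}(\R^n)$. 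Hence $S_Y=\widetilde Y(\pi^{-1}(H_\infty))\cap\R^n$, and since $\pi^{-1}(H_\infty)$ is an $(n-1)$-dimensional divisor in the compact variety $\widetilde X$, its regular image has dimension at most $n-1$, giving $\dim S\leq n-1$.

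The heart of the theorem is producing a polynomial $\phi:\R\to S_Y$ with $q\in\phi(\R)$ for a given $q\in S_Y$. Each irreducible component of $\pi^{-1}(H_\infty)$ is uniruled --- the blowups of smooth centers used to resolve $\bar Y$ produce exceptional projective bundles, which are covered by lines --- so through every preimage of $q$ in this divisor there passes a rational curve $\gamma:\mathbb{P}^1\to\widetilde X$. Choosing $\gamma$ so that $\widetilde Y\circ\gamma$ is nonconstant, sends $\infty\in\mathbb{P}^1$ to a point at infinity of the target $\mathbb{P}^n$, and carries $\mathbb{P}^1\setminus\{\infty\}$ into $S_Y$, one obtains a polynomial parametrization $\phi:\R\to S_Y$ with $q\in\phi(\R)$; its image is a one-dimensional semialgebraic curve through $q$, which also yields the lower bound $\dim S\geq 1$. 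The main obstacle is precisely this last choice of ruling: one must arrange simultaneously that the chosen rational curve lands inside $S_Y$ rather than merely in $\widetilde Y(\pi^{-1}(H_\infty))$, and that the affine parametrization has no finite poles. The classical semialgebraic curve selection lemma is insufficient, since it yields only a local real-analytic arc; promoting it to a global polynomial map $\R\to S_Y$ requires the explicit uniruled structure of the exceptional divisor together with a careful real-algebraic descent, which is the substantive geometric content of Jelonek's work \cite{J}.
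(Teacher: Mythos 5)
The paper does not prove this statement: Theorem \ref{teo:J2} is imported verbatim from Jelonek \cite{J} (it is introduced in Section \ref{sec:poly} under the heading ``the following results due to Jelonek''), so there is no in-paper argument to compare yours against. Judged on its own, your treatment of the two easy assertions is fine: the diagonal argument gives closedness, via the standard characterization that $q\in S_Y$ iff $Y(x_k)\to q$ along some sequence with $|x_k|\to\infty$, and the first-order description plus Tarski--Seidenberg gives semialgebraicity, exactly as the paper itself does for semialgebraic maps in Proposition \ref{prop:sa}. The upper bound $\dim S\leq n-1$ via a resolution $\pi:\widetilde X\to\mathbb P^n$ of the indeterminacy of $\bar Y$ is also salvageable, since it only uses the inclusion $S_Y\subset\widetilde Y\bigl(\pi^{-1}(H_\infty)\bigr)\cap\R^n$, which does follow from compactness.

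The genuine gap is the last (and main) assertion: the existence, through every $q\in S_Y$, of a polynomial curve $\phi:\R\to S_Y$, which is also what forces $\dim S\geq1$. Your sketch leaves unresolved exactly the points where the real difficulty lives. First, the equality $S_Y=\widetilde Y\bigl(\pi^{-1}(H_\infty)\bigr)\cap\R^n$ is only an inclusion over $\R$: real points of the exceptional divisors need not be limits of real points of $\R^n$ (the centers of the blow-ups need not even have real points), so a rational curve in $\pi^{-1}(H_\infty)$ need not map into $S_Y$ at all. Second, the rational curve you select may be contracted by $\widetilde Y$, and uniruledness of the exceptional divisor does not by itself produce a non-contracted real rational curve through the particular preimage of $q$. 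Third, even a good real rational curve only yields a regular map $\mathbb P^1\to\mathbb P^n$ whose restriction to an affine chart is a priori a rational parametrization with possible finite poles, not a polynomial one. You flag all three issues yourself and defer to ``the substantive geometric content of Jelonek's work''; that is an honest assessment, but it means the proof of the theorem's central claim --- Jelonek's $\R$-uniruledness of $S_Y$ --- is not actually given. Since the result is in any case quoted from \cite{J}, the appropriate course is to cite it rather than present the outline as a proof.
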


The following result due to Bialynicki-Birula and Rosenlicht
\cite{BR} say us that to obtain bijectivity of polynomial
mappings is enough to take its injectivity.

\begin{teo}
\label{teo:BR} If $Y:\R^n\to\R^n$ is an injective polynomial
mapping, then $Y$ is also surjective.
\end{teo}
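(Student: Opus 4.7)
The strategy is to prove $Y(\R^n)=\R^n$ by showing $Y(\R^n)$ is non-empty, open, and closed in the connected space $\R^n$. Openness follows immediately from Brouwer's invariance of domain: a continuous injection of $\R^n$ into itself is a homeomorphism onto its image, so $Y(\R^n)$ is open and $Y^{-1}:Y(\R^n)\to\R^n$ is continuous. The crux is then to establish closedness, which I will do by proving that $Y$ is a proper map.

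For properness, suppose by contradiction there is a sequence $(p_k)\subset\R^n$ with $|p_k|\to\infty$ and $Y(p_k)\to q\in\R^n$. Continuity of $Y^{-1}$ on $Y(\R^n)$ rules out $q\in Y(\R^n)$ (otherwise $(p_k)$ would converge in $\R^n$), so $q\in\partial Y(\R^n)$. Since $Y(\R^n)$ is semi-algebraic by Tarski--Seidenberg, the semi-algebraic curve selection lemma supplies a semi-algebraic arc $\alpha:(0,1]\to Y(\R^n)$ with $\alpha(t)\to q$ as $t\to 0^+$. Pulling back by $Y^{-1}$ gives a semi-algebraic arc $\beta:=Y^{-1}\circ\alpha:(0,1]\to\R^n$, and the previous remark forces $|\beta(t)|\to\infty$. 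Using the Newton--Puiseux expansion at $t=0$ of the coordinates of $\beta$, an appropriate reparametrization followed by truncation to the leading polynomial parts produces a non-constant polynomial map $\phi:\R\to\R^n$ with $|\phi(s)|\to\infty$ and $Y(\phi(s))\to q$ as $s\to\infty$. Then $Y\circ\phi:\R\to\R^n$ is a polynomial map in one variable whose coordinates all have finite limits at $s=\infty$; a one-variable polynomial with a finite limit at infinity must be constant, so $Y\circ\phi\equiv q$ on $\R$. Hence $Y$ sends the infinite set $\phi(\R)$ to the single point $q$, contradicting the injectivity of $Y$.

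With $Y(\R^n)$ non-empty, open, and closed in the connected space $\R^n$, we conclude $Y(\R^n)=\R^n$, so $Y$ is surjective.

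The main obstacle is extracting the polynomial curve $\phi$ from the semi-algebraic arc $\beta$: the basic curve-selection lemma only yields Nash (analytic) arcs, for which a finite limit at infinity does not force constancy, so the polynomial form of $\phi$ is essential for the final contradiction. Securing this step requires the finer structure of semi-algebraic germs at infinity encoded by Newton--Puiseux series, or, alternatively, a projective compactification argument that resolves the indeterminacies of the rational extension of $Y$ to a morphism between projective varieties on which limits at infinity can be tracked by a proper morphism.
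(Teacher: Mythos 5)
The paper does not prove this statement: it is quoted as a known theorem of Bialynicki-Birula and Rosenlicht \cite{BR}, so there is no internal proof to compare yours against. Judged on its own, your outline (non-empty $+$ open by invariance of domain $+$ closed via properness, with non-properness witnessed by a curve going to infinity) is a reasonable classical route, and your endgame is sound: if you really had a non-constant polynomial curve $\phi:\R\to\R^n$ with $Y\circ\phi$ bounded, each coordinate of $Y\circ\phi$ would be a bounded one-variable polynomial, hence constant, and $Y$ would collapse the infinite set $\phi(\R)$ to a single point, contradicting injectivity.

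The gap is exactly where you flag it, and it is genuine rather than a technicality: truncating the Puiseux expansion of $\beta$ to its polynomial part $\phi$ does not preserve the boundedness of $Y$ along the curve. Writing $\beta(s)=\phi(s)+r(s)$ with $r(s)\to 0$, the error $Y(\beta(s))-Y(\phi(s))$ is of order $\|DY\|\cdot|r(s)|$ along the curve, and $\|DY\|$ grows polynomially along $\phi(s)\to\infty$ while $r(s)$ decays only like a power of $1/s$; there is no reason for the product to tend to zero. Concretely, for the coordinate function $f(x,y)=x^3-y$ and the semialgebraic arc $\beta(s)=(s+1/s,\ s^3+3s)$ one has $f(\beta(s))=3/s+1/s^3\to 0$, while the truncation $\phi(s)=(s,\ s^3+3s)$ gives $f(\phi(s))=-3s$, which is unbounded. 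So $Y\circ\phi$ need not converge to $q$, need not be bounded, and the contradiction does not materialize; and inside your reductio you cannot invoke the injectivity of $Y$ to exclude such behaviour, since properness of injective polynomial maps is precisely what is being proved. Closing this step requires the finer input you allude to (a Jelonek-type analysis of the non-properness set, or resolving the map at infinity on a projective compactification), none of which is actually carried out, so the central step of the proof is asserted rather than proved.
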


In the following we use the standard notation $\pi_1(\el)$ to
the fundamental group of a topological space $\el$. The proof
of the following lemma is easy and will be omitted.

\begin{lem}
\label{lem:isomorphism} Let $Y:\R^n\to\R^n$ be a $C^1$-map
such that $\spec(Y)\cap\{0\}=\vazio$ and $\pi_1(\el)$ trivial
for all $\el\in\F_{i_1\ldots i_{n-2}}$, where $\{i_1,\ldots,
i_{n-2}\}$ is a $(n-2)$-combination of $\{1,\ldots,n\}$. Let
$A:\R^n\to\R^n$ be a linear isomorphism. If $Z=A\circ Y\circ
A^{-1}$ then $\spec(Y)$=$\spec(Z)$, $S_Z=A(S_Y)$ and
$\pi_1(\el_Z)$ is trivial, for all $\el_Z\in\F^Z_{i_1\ldots
i_{n-2}}$, where $\F^Z$ indicates foliations induced by
coordinate functions of $Z$.
\end{lem}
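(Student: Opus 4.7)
The plan is to verify the three assertions separately, each by a direct computation exploiting the fact that $A$ is a linear isomorphism, and hence a smooth diffeomorphism of $\R^n$. To show $\spec(Z) = \spec(Y)$, I would differentiate $Z = A \circ Y \circ A^{-1}$ at a point $p$ by the chain rule to obtain $DZ_p = A \cdot DY_{A^{-1}(p)} \cdot A^{-1}$. Similar matrices share the same eigenvalues, so $\spec(DZ_p) = \spec(DY_{A^{-1}(p)})$; taking the union over $p \in \R^n$ and using that $A^{-1}$ is a bijection then yields $\spec(Z) = \spec(Y)$.

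For the non-proper set, the key identity is $Z^{-1}(K) = A(Y^{-1}(A^{-1}(K)))$ for every $K \subset \R^n$, which follows directly from $Z = A \circ Y \circ A^{-1}$. Since both $A$ and $A^{-1}$ are homeomorphisms, they preserve compactness and send neighborhood bases of a point $p$ to neighborhood bases of $A^{-1}(p)$. Unwinding the definition of the non-proper set, I would then conclude that $p \in S_Z$ precisely when $A^{-1}(p) \in S_Y$, giving $S_Z = A(S_Y)$.

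For the foliation claim, each leaf of $\F^Z_{i_1 \ldots i_{n-2}}$ is a connected component of some preimage $Z^{-1}(V)$, where $V$ is the 2-plane of $\R^n$ obtained by fixing the coordinates of indices $i_1, \ldots, i_{n-2}$ at prescribed values. Writing $Z^{-1}(V) = A(Y^{-1}(A^{-1}(V)))$ and using that the homeomorphism $A$ induces $\pi_1$-isomorphisms on connected components, each such leaf is homeomorphic to a connected component of $Y^{-1}(A^{-1}(V))$. The only potential subtlety is that for a general linear $A$ the plane $A^{-1}(V)$ need not itself be a coordinate 2-plane; however in the intended uses $A$ acts on coordinates in such a way (for instance as a permutation or a diagonal rescaling) that $A^{-1}(V)$ is again of the form defining a leaf of some $\F^Y_{j_1 \ldots j_{n-2}}$ for a possibly permuted index set, at which point the hypothesis on the leaves of $Y$-foliations applies directly.

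None of the three verifications presents a real obstacle, which is consistent with the author's remark that the proof is easy. The only step requiring a moment of thought is the third one, where the reader must record that passing through the diffeomorphism $A$ converts leaves of $\F^Z$ exactly into leaves associated to $Y$ (up to the action of $A$ on coordinates), so simple connectedness transfers without loss.
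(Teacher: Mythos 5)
The paper does not actually supply a proof of this lemma --- it says ``the proof \dots is easy and will be omitted'' --- so there is nothing to compare your argument against line by line. Your treatment of the first two assertions is correct and is surely what the authors had in mind: the chain rule gives $DZ_p=A\,DY_{A^{-1}(p)}\,A^{-1}$, similarity preserves eigenvalues, and the identity $Z^{-1}(K)=A\bigl(Y^{-1}(A^{-1}(K))\bigr)$ together with the fact that $A$ is a homeomorphism carries compactness and neighborhood bases back and forth, giving $S_Z=A(S_Y)$.

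The third assertion is where there is a genuine gap, and you have in fact put your finger on it without resolving it. A leaf of $\F^Z_{i_1\ldots i_{n-2}}$ is $A$ applied to a connected component of $Y^{-1}(A^{-1}(V))$, where $V$ is a coordinate $2$-plane; for a general linear isomorphism $A$ the plane $A^{-1}(V)$ is an arbitrary affine $2$-plane, while the hypothesis only controls connected components of preimages of \emph{coordinate} $2$-planes. Your escape route --- restricting to $A$ that permute or rescale coordinates --- is not licensed by the statement, which allows an arbitrary linear isomorphism, and more importantly it is inconsistent with how the lemma is actually invoked in Theorem \ref{teo:injectivity4} and Theorem A: there $A$ is a \emph{generic} linear change of coordinates, chosen so that the curve $\gamma\subset S_Y$ becomes transversal to $\{x_1=a_1\}$ and so that $D(a_1)\cap S_Y$ projects injectively to an axis, and such an $A$ will in general not preserve the coordinate $2$-planes. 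So as written your proof establishes the third claim only for a class of $A$ that does not cover the paper's application; to prove the lemma as stated one would need the a priori stronger hypothesis that connected components of $Y^{-1}(W)$ are simply connected for \emph{every} affine $2$-plane $W$ (or an argument that this follows from the coordinate-plane case, which neither you nor the paper provides). This should be flagged explicitly rather than absorbed into ``the intended uses.''
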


The next result proved in \cite{DT} by Dru\.{z}kowski and
Tutaj provides an estimate of the cardinality of preimages of
any point to a polynomial local diffeomorphism.

\begin{teo}
\label{teo:finite} Let $Y:\R^n\to\R^n$ be a polynomial local
diffeomorphism. Then, given $q\in\R^n$, the equation $Y(p)=q$
has a finite number of solutions and
$$\#\{p\in\R^n;\ Y(p)=q\}\leq \overset{n}{\underset{i=1}{\Pi}}\dg(f_i).$$
\end{teo}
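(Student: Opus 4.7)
The plan is to complexify the system and invoke a Bezout-type bound. Fix $q \in \R^n$, set $g_i := f_i - q_i$ for $i = 1, \ldots, n$, and write $d_i := \dg(f_i) = \dg(g_i)$. Viewing the $g_i$ as elements of $\C[x_1, \ldots, x_n]$, consider the complex common zero set
$$V_{\C} := \{p \in \C^n : g_1(p) = \cdots = g_n(p) = 0\}.$$
Then $Y^{-1}(q) = V_{\C} \cap \R^n$, and the claim reduces to bounding $\#(V_{\C} \cap \R^n)$ by $\prod_{i=1}^{n} d_i$.

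The first step is to show that every $p_0 \in Y^{-1}(q)$ is an \emph{isolated} point of $V_{\C}$. The polynomial $\det DY$ has the same value whether we regard $Y$ as a map over $\R$ or over $\C$, so the complexified differential $DY_{p_0}\colon \C^n \to \C^n$ is a linear isomorphism. By the holomorphic inverse function theorem applied to $Y_{\C} = (f_1, \ldots, f_n)\colon \C^n \to \C^n$ at $p_0$, there is an open complex neighborhood on which $Y_{\C}$ is injective, so $p_0$ is the only solution of $g_1 = \cdots = g_n = 0$ in that neighborhood. In particular, $p_0$ is isolated in $V_{\C}$ with local intersection multiplicity~$1$.

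The second step is to invoke Bezout's inequality in the form that controls isolated intersections: for any $n$ polynomials $h_1, \ldots, h_n \in \C[x_1, \ldots, x_n]$ of degrees $e_1, \ldots, e_n$, the sum of the local intersection multiplicities at the isolated points of their common zero set is at most $\prod_i e_i$. Combined with Step~1 this yields
$$\#Y^{-1}(q) \;=\; \#(V_{\C} \cap \R^n) \;\leq\; \sum_{p \text{ isolated in } V_{\C}} 1 \;\leq\; \prod_{i=1}^{n} d_i,$$
which also forces $Y^{-1}(q)$ to be finite.

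The main obstacle is that $V_{\C}$ need not be purely $0$-dimensional: although $V_{\C} \cap \R^n$ consists of isolated points of $V_{\C}$ by Step~1, there may exist positive-dimensional components of $V_{\C}$ living entirely in $\C^n \setminus \R^n$, and over such components the naive Bezout equality $\#V_{\C} = \prod d_i$ breaks down. The remedy is either to invoke the refined Bezout inequality from intersection theory (Fulton), which isolates the $0$-dimensional contributions to the total degree, or to argue by perturbation: pick a generic $q + \eps \in \C^n$ for which $Y_{\C}^{-1}(q + \eps)$ is $0$-dimensional of cardinality at most $\prod d_i$, and then use continuity at isolated solutions (each real solution at $q$ deforms to at least one nearby complex solution at $q+\eps$) to transfer the bound back to $q$. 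Formalizing that only the isolated contributions count against the degree product is the technical heart of the argument; the rest is just the observation that the non-vanishing Jacobian forces each real point in the fiber to be an isolated transverse zero.
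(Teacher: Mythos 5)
The paper offers no proof of this statement to compare against: Theorem \ref{teo:finite} is quoted verbatim from Dru\.{z}kowski and Tutaj \cite{DT} and used as a black box. Judged on its own, your argument is sound and is the natural one. Step 1 is correct: the Jacobian matrix of $Y$ at a real preimage $p_0$ is invertible over $\R$, hence over $\C$, so the holomorphic inverse function theorem makes $p_0$ an isolated, multiplicity-one point of $V_{\C}$. You also put your finger on the genuine difficulty, namely that $V_{\C}$ may have positive-dimensional components lying off $\R^n$ (the hypothesis only controls the Jacobian at real points), so the naive Bezout count does not apply. Both remedies you sketch close the gap: the refined Bezout inequality (the sum of local intersection multiplicities over the isolated points of $V(h_1,\ldots,h_n)\subset\C^n$ is at most $\prod_i \deg h_i$, excess components notwithstanding) gives the bound at once; and the perturbation route is also complete, since the local biholomorphism at $p_0$ forces the image of $Y_{\C}$ to contain a neighborhood of $q$, hence $Y_{\C}$ is dominant, a generic fiber has at most $\prod_{i=1}^{n}\dg(f_i)$ points, and each of the finitely many transverse real solutions persists as a distinct solution over a generic nearby target. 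The only caveat is that all of the real content is concentrated in whichever classical but nontrivial fact you invoke (refined Bezout, or the degree bound for generic fibers); to be self-contained the write-up needs a precise citation for that ingredient, but there is no gap in the logic.
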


The next result ``is contained" in Theorem A but it was
remained here because its proof provides a helpful
geometrical viewpoint of the reasoning also applied in the
last one.

\begin{teo}
\label{teo:injectivity4} Let $Y=(f_1,\ldots,f_4):\R^4\to\R^4$
be a polynomial map such that $\spec(Y)\cap\{0\}=\vazio$ and
$\pi_1(\el)$ is trivial, for all $\el\in\F_{ij}$, and all
$i,j\in\{1,\ldots,4\}$ with $i\neq j$. If cod$(S_Y)\geq 2$
then $Y$ is a bijection.
\end{teo}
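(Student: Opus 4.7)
The plan is to establish injectivity of $Y$ and invoke Theorem \ref{teo:BR} to obtain bijectivity. The injectivity argument proceeds by contradiction, localizing a putative failure of injectivity onto a single simply connected leaf of $\F_{12}$ and reducing to a planar bijectivity statement.

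First, by Theorem \ref{teo:J1} I may assume $\codim(S_Y)=2$, so $\dim S_Y=2$ as a semi-algebraic set in $\R^4$. Assume for contradiction that there exist distinct $p,p'\in\R^4$ with $Y(p)=Y(p')$, and write $c_k=f_k(p)=f_k(p')$. Both points $p,p'$ belong to every leaf $L_{ij}=\{f_i=c_i,\,f_j=c_j\}$ of $\F_{ij}$; each such $L_{ij}$ is simply connected by hypothesis, non-compact by Proposition \ref{pro:compact}, and closed in $\R^4$ as a common level set of submersions, hence $L_{ij}\cong\R^2$ by the classification of simply connected non-compact surfaces. To ensure the finiteness of the slice used below, I would apply Lemma \ref{lem:isomorphism} with a generic $A\in\mathrm{GL}(4,\R)$ and replace $Y$ by $Z=A\circ Y\circ A^{-1}$; this preserves all hypotheses of the theorem and, for generic $A$, forces the codimension-$2$ affine plane associated to the first two coordinates of $Ap$ in the target to meet $A(S_Y)$ in only finitely many points, by generic transversality of a codimension-$2$ section to a $2$-dimensional semi-algebraic set. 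After relabelling, assume $S_Y\cap(\{c_1\}\times\{c_2\}\times\R^2)$ is finite.

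Now $g:=(f_3,f_4)|_{L_{12}}\colon L_{12}\to\R^2$ is a $C^2$ local diffeomorphism with $g(p)=g(p')$ and $p\neq p'$. Because $L_{12}$ is closed in $\R^4$, escape to infinity in $L_{12}$ coincides with escape in $\R^4$; hence $S_g\subset\{(y_3,y_4):(c_1,c_2,y_3,y_4)\in S_Y\}$, which is finite by the previous step. The contradiction will follow from a planar bijectivity statement: \emph{every $C^2$ local diffeomorphism $g\colon L\to\R^2$, with $L$ a simply connected non-compact $2$-manifold and $\#S_g<\infty$, is a bijection}. I would prove it via the covering-space structure $g\colon L\setminus g^{-1}(S_g)\to\R^2\setminus S_g$, which is a finite cover of some degree $k$: the fact that $g$ extends as a local diffeomorphism across every point of $g^{-1}(S_g)$ makes each sheet near a preimage of $s\in S_g$ a fixed point of the local monodromy at $s$, and combined with the single end of $L\cong\R^2$ (which forces the monodromy at infinity to be a single $k$-cycle) this yields $k=1$. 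Injectivity of $g$ gives $p=p'$, a contradiction, so $Y$ is injective and hence bijective by Theorem \ref{teo:BR}.

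The main obstacle is the planar lemma. A mere Euler-characteristic count on $\R^2\setminus S_g$ admits coverings of arbitrary degree, and ruling out $k\geq 2$ really does require combining the local-diffeomorphism extension of $g$ across $g^{-1}(S_g)$ with the topology of $L\cong\R^2$. In the polynomial case one can instead appeal to Theorem \ref{teo:J2}: every non-empty component of $S_g$ is positive-dimensional, so $\#S_g<\infty$ forces $S_g=\vazio$, whence $g$ is a proper local diffeomorphism between simply connected surfaces, i.e.\ a diffeomorphism. The genericity argument for the choice of $A$ is routine from semi-algebraic dimension theory, but must be tracked compatibly with the data preserved by Lemma \ref{lem:isomorphism}.
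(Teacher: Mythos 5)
There is a genuine gap, and it occurs at the very first reduction. You assert that the two points $p\neq p'$ with $Y(p)=Y(p')$ ``belong to every leaf $L_{ij}=\{f_i=c_i,\,f_j=c_j\}$'', but $\{f_i=c_i,\,f_j=c_j\}$ is a \emph{level set}, whose connected components are the leaves of $\F_{ij}$. The hypothesis of the theorem (simple connectivity) is imposed on the leaves, i.e.\ on the connected components, and nothing forces $p$ and $p'$ to lie on the same component of $\{f_1=c_1,\,f_2=c_2\}$. If they lie on different leaves $L$ and $L'$, your planar lemma applied to $g|_L$ and $g|_{L'}$ only shows that each of these restrictions is a bijection onto $\R^2$; this is perfectly compatible with $Y(p)=Y(p')$ and $p\neq p'$, since the two preimages sit on different sheets. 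In other words, your argument proves at best that $Y$ is injective on each individual leaf of $\F_{12}$, which is equivalent to the level sets of $(f_1,f_2)$ being $k$-to-one covers of $\R^2$ under $(f_3,f_4)$ with $k$ equal to the number of components; ruling out $k\geq 2$ is precisely the content of the theorem and is not addressed. (Your planar lemma itself is fine: the monodromy around the single possible non-proper value has a fixed sheet coming from the local-diffeomorphism extension and must nevertheless act transitively, forcing degree one. But the secondary route via Theorem \ref{teo:J2} is not available, since $g$ is a restriction of polynomials to an abstract leaf, not a polynomial self-map of $\R^n$.)

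The paper takes a different and, in effect, complementary route that sidesteps the connectedness problem. Rather than proving injectivity directly, it shows $S_Y=\vazio$ and then invokes Theorem \ref{teo:J1}. Concretely, it places a small $2$-disc $D(t)$ in the target meeting $S_Y$ in a single point, with boundary circle $C(t)$ disjoint from $S_Y$; the preimage $Y^{-1}(C(t))$ is a finite union of circles lying in possibly \emph{several different} leaves of $\F_{12}$, each of which is a plane by your same ``simply connected $+$ non-compact'' observation, so each circle bounds a compact disc in its own leaf. Counting these discs shows $\#Y^{-1}$ does not drop at the non-proper point, and sweeping the discs $D(t)$ over nearby parameters produces a compact neighborhood of a point of $S_Y$ with compact preimage --- a contradiction with non-properness. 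The fact that the preimage circles may be distributed over several leaves is harmless there, because the argument only counts them; your reduction, by contrast, needs them all on one leaf. To salvage your approach you would have to prove that the level sets $\{f_1=c_1,\,f_2=c_2\}$ are connected, and that is essentially the theorem itself.
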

\begin{dem}
Suppose that $Y$ is not bijective. By Theorem \ref{teo:J1},
we must have cod$(S_Y)=$dim$(S_Y)=2$. Since $Y$ is a local
diffeomorphism $Y(\R^4)$ is open and applying Theorem
\ref{teo:J2} we obtain that
\begin{enumerate}
    \item[(a)] $Y(\R^4)\supset\R^4\setminus S_Y$.
\end{enumerate}

Analogously to Gutierrez and Maquera in \cite{GM} we will
construct a compact neighborhood $W$ in $\R^4$ of a not
proper point $p\in S_Y$ that has compact pre-image, which is
a contradiction.

By Theorem \ref{teo:J2} and Lemma \ref{lem:isomorphism}, we
may suppose that $S_Y$ contains an analytic regular curve
$\gamma:(a_1-\delta_1,a_1+\delta_1)\to\R^4$ meeting the
hyperplane $\{x_1=a_1\}$ transversally at the point
$p=\gamma(a_1)=(a_1,\ldots,a_4)$. Furthermore, for a
sufficiently small $\delta_2>0$, we can suppose that the
$3$-disc $D(a_1)=\{a_1\}\times D \subset \{x_1=a_1\}$, where
$D$ is the 3-disc of $\R^3$ centered at $(a_2,a_3,a_4)$ with
radius $\delta_2$, satisfies:
\begin{enumerate}
    \item[(b)] $\gamma(a_1)=D(a_1)\cap\gamma$, $C(a_1)\cap\gamma=\vazio$,
    where $C(a_1)$ is the boundary of $D(a_1)$, and $D(a_1)\cap S_Y$ has
    an injective projection at the $x_2$-axis. That is, exist
    $\eps_1,\eps_2>0$ such that
    $$\Pi_2|_{D(a_1)\cap S_Y}:D(a_1)\cap S_Y\to[a_2-\eps_1,a_2+\eps_2]$$
    is bijective and so an homeomorphism with inverse
    $$\varphi:[a_2-\eps_1,a_2+\eps_2]\to D(a_1)\cap S_Y,$$
    that associates each $t\in[a_2-\eps_1,a_2+\eps_2]$ to
    $\varphi(t)=(\Pi_2|_{D(a_1)\cap S_Y})^{-1}(t)=\Pi_2^{-1}(t)\cap D(a_1)\cap
    S_Y$ (see Figure \ref{fig:dem2}).
\end{enumerate}

In this way, to each $t\in(a_2-\eps_1,a_2+\eps_2)$ we can
associate an unique 2-disc $D(t)\subset D(a_1)\cap \{x_2=t\}$
which satisfies $D(t)\cap S_Y=\{\varphi(t)\}$ and $C(t)\cap
S_Y=\vazio$, where $C(t)$ is the boundary of $D(t)$.
\begin{enumerate}
    \item[(c)] It is well known that $\# Y^{-1}$ is locally
    constant at proper points of $Y$ and by Theorem \label{teo:finita} there exists a
    positive integer $K$ such that for all $q\in\R^4$, $\#Y^{-1}(q)\leq
    K$.
\end{enumerate}
\begin{figure}[h]
\begin{center}
\includegraphics[width=9cm]{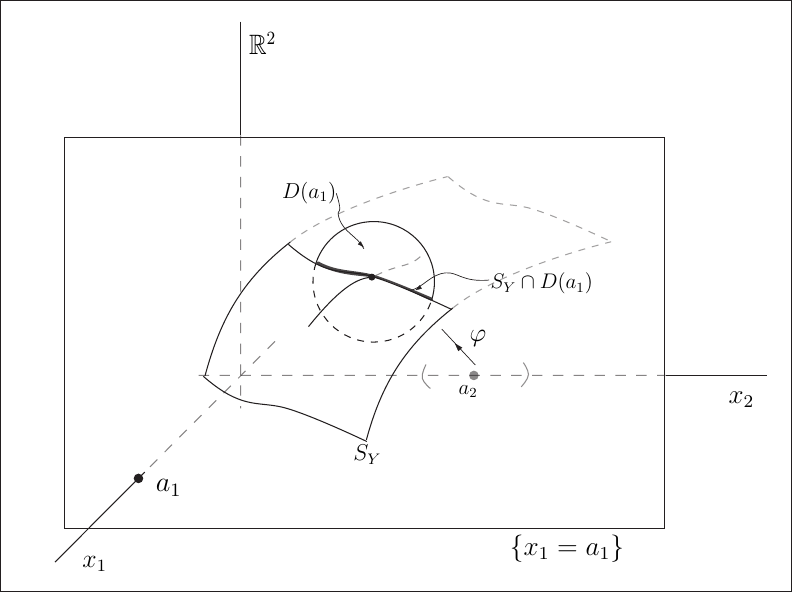}
\caption{\label{fig:dem2} }
\end{center}
\end{figure}
These imply that $Y^{-1}(C(t))$ is an union of finitely many
embedded circles $C_1(t),\ldots,C_k(t)$ contained in
$f_1^{-1}(a_1)\cap f_2^{-1}(t)$, that is, contained in leaves
of $\F_{12}$. Note that each $Y|_{C_j(t)}:C_j(t)\to C(t)$ is
a finite covering. As, by hypothesis each connected component
of $f_1^{-1}(a_1)\cap f_2^{-1}(t)$ is a simply connected
2-submanifold of $\R^4$ and by Proposition \ref{pro:compact}
it is not compact, we have that each of this leaves is a
plane. Hence, for each $j=1,2,\ldots,k$ there exists a
compact 2-disc $D_j(t)\subset f_1^{-1}(a_1)\cap f_2^{-1}(t)$
bounded by $C_j(t)$. It follows that $Y(D_j(t))=D(t)$, for
all $j\in\{1,\ldots,k\}$. As $D(t)$ is simply connected
$Y|_{D_j(t)}:D_j(t)\to D(t)$ is a bijective map and so a
diffeomorphism, for all $j\in\{1,\ldots,k\}$.

As $D(t)\cap S_Y=\varphi(t)$ and $\# Y^{-1}$ is locally
constant at proper points, $\# Y^{-1}$ must be identically
equal to $k$ in $D(t)\setminus\{\varphi(t)\}$ and so
$Y^{-1}(D(t)\setminus\{\varphi(t)\})\subset\cup_{j=1}^{k}D_j(t)$.
For the other hand, $\#Y^{-1}(\varphi(t))\geq k$, but since
$Y$ is a local diffeomorphism, we conclude
\begin{enumerate}
    \item [(d)] for all $q\in D(t)$, $\#Y^{-1}(q)=k$ and
    $Y^{-1}(D(t))=\cup_{j=1}^{k}D_j(t)$.
\end{enumerate}

Taking $\overline{\delta}_2=$min$\{\eps_1,\eps_2\}$, we have
just seen that for all
$t\in[a_2-\overline{\delta}_2,a_2+\overline{\delta}_2]=I_2$,
$Y^{-1}(D(t))=\cup_{j=1}^{k}D_j(t)$ (for the same $k$), where
$D(t)$ and $D_j(t)$ are as previously defined. Hence, we may
define $W(a_1)=\cup_{t\in I_2}(D(t))$ which is a 3-compact se
containing $\gamma(a_1)$, contained at $D(a_1)$ and
satisfying $Y^{-1}(W(a_1))$ is an union of $k$ 3-compact sets
in $\R^4$.

Now, notice that reducing $\delta_1$, if necessary, we have
that

\begin{enumerate}
    \item [(e)] for all
    $s\in[a_1-\delta_1,a_1+\delta_1]=I_1$, the 3-disc $D(s)=\{s\}\times
    D$ and its boundary $C(s)=\{s\}\times\partial D$, where
    $D$ is as previously defined, satisfy
    $\{\gamma(s)\}=D(s)\cap\gamma$ and
    $C(s)\cap\gamma=\vazio$.
\end{enumerate}

Proceeding as above, we may construct a 3-compact set $W(s)$,
for each $s\in I_1$, contained in $D(s)$ such that
$Y^{-1}(W(s))$ is an union of $k$ 3-compact sets in $\R^4$.
In fact, given an $s\in I_1$ we can choose continuously a
$\overline{\delta}_2(s)$ in such way that $W(s)=\cup_{t\in
I_2(s)}D(s,t)$, where
$I_2(s)=[a_2-\overline{\delta}_2(s),a_2+\overline{\delta}_2(s)]$,
contains $\gamma(s)$ and satisfies all the required
properties in the beginning of this paragraph.

Since $\overline{\delta}_2:I_1\to\R_+$ is a continuous
function defined in a compact set and not achieve 0, we have
that $\delta=\inf_{s\in I_1}\{\overline{\delta}_2(s)\}$ is
positive. In this way, for all $s\in I_1$ the set
$W(s)=\cup_{t\in[a_2-\delta,a_2+\delta]}D(t)$ has as
pre-image of Y an union of $k$ 3-compact sets in $\R^4$.

Therefore, since $W=\cup_{s\in I_1}W(s)$ is a compact
neighborhood of $p=\gamma(a_1)$ in $\R^4$ such that
$Y^{-1}(W)$ is a compact set, we obtain a contradiction with
the assumption $p\in S_Y$.
\end{dem}


The proof of the next result is analogous to the previous one
so we give a shorter version emphasizing the differences
between $n=4$ and $n>4$ cases.

\begin{thmA}
\label{teo:injectivityn} Let $Y=(f_1,\ldots,f_n):\R^n\to\R^n$
be a polynomial map such that $\spec(Y)\cap\{0\}=\vazio$ and
for all $(n-2)$-combination $\{i_1,\ldots,i_{n-2}\}$ of
\{1,\ldots,n\} the leaves of $\F_{i_1\ldots i_{n-2}}$ are
simply connected. If cod$(S_Y)\geq 2$ then $Y$ is a
bijection.
\end{thmA}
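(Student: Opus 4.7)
The plan is to derive a contradiction from the assumption that $Y$ is not bijective, by constructing a compact neighborhood $W$ of some $p\in S_Y$ whose preimage $Y^{-1}(W)$ is compact; this is the $n$-dimensional analogue of the disc-stacking argument of Theorem \ref{teo:injectivity4}. Assuming $Y$ is not bijective, Theorem \ref{teo:J1} forces $\codim(S_Y)=2$, and Theorem \ref{teo:J2} guarantees that $S_Y$ contains a semi-algebraic curve through every one of its points. Hence, at a generic point $p=(a_1,\ldots,a_n)\in S_Y$ the set $S_Y$ is locally a smooth analytic $(n-2)$-submanifold of $\R^n$.

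By Lemma \ref{lem:isomorphism} we may conjugate $Y$ by a linear isomorphism of $\R^n$ without altering the hypotheses; choosing it so that the projection $\pi(x_1,\ldots,x_n)=(x_1,\ldots,x_{n-2})$ restricts to a local diffeomorphism of $S_Y$ onto $\R^{n-2}$ at $p$, we may assume that on an open box $B\subset\R^{n-2}$ centered at $\bar a=(a_1,\ldots,a_{n-2})$ the set $S_Y$ is the graph $\{(\bar s,\varphi(\bar s)):\bar s\in B\}$ of an analytic map $\varphi:B\to\R^2$.

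For each $\bar s\in B$ let $D(\bar s)$ be the closed $2$-disc of radius $\eps$ in the affine $2$-plane $\pi^{-1}(\bar s)$ centered at $(\bar s,\varphi(\bar s))$; after shrinking $B$ and $\eps$ we have $D(\bar s)\cap S_Y=\{(\bar s,\varphi(\bar s))\}$ and the boundary circle $C(\bar s)$ is disjoint from $S_Y$. Set $W=\bigcup_{\bar s\in\overline B}D(\bar s)$, a compact neighborhood of $p$. Since $Y:\R^n\setminus Y^{-1}(S_Y)\to\R^n\setminus S_Y$ is proper and $C(\bar s)\subset\R^n\setminus S_Y$, Theorem \ref{teo:finite} implies that $Y^{-1}(C(\bar s))$ is a finite disjoint union of embedded circles, each lying in a leaf of $\F_{1\ldots n-2}$. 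Each such leaf is non-compact by Proposition \ref{pro:compact} and simply connected by hypothesis, so homeomorphic to $\R^2$; each circle therefore bounds a compact $2$-disc in its leaf, and the restriction of $Y$ to such a disc is a covering of the simply connected $D(\bar s)$, hence a diffeomorphism. Thus $Y^{-1}(D(\bar s))$ is a disjoint union of compact $2$-discs, each mapped diffeomorphically onto $D(\bar s)$; since $\#Y^{-1}$ is locally constant on $\R^n\setminus S_Y$ and $D(\bar s)\setminus\{(\bar s,\varphi(\bar s))\}$ is connected, the number $k$ of such discs is independent of $\bar s\in B$.

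It remains to assemble these $k$ families of $2$-discs, as $\bar s$ ranges over the compact box $\overline B$, into $k$ genuinely compact subsets of $\R^n$ whose union equals $Y^{-1}(W)$, contradicting $p\in S_Y$. The principal obstacle is this gluing step: the local admissible radius giving the required disc structure is a priori only lower semicontinuous in $\bar s$. Following the pattern of the four-dimensional proof, I would carry out the assembly inductively in $n-2$ stages, building compact sets $W(s_1,\ldots,s_j)$ for $j=1,\ldots,n-2$, using at each stage the continuity of the admissible radius on a compact sub-box and passing to its positive infimum to obtain the parameter range of the next stage.
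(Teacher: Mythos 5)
Your proposal is correct and follows essentially the same strategy as the paper: reduce to $\codim(S_Y)=2$ via Theorem \ref{teo:J1}, normalize coordinates with Lemma \ref{lem:isomorphism} so that $S_Y$ is locally parametrized over an $(n-2)$-dimensional base, use that the leaves of $\F_{1\ldots n-2}$ are non-compact (Proposition \ref{pro:compact}) and simply connected, hence planes, so that preimages of the boundary circles bound discs mapped diffeomorphically, and then assemble a compact neighborhood of a point of $S_Y$ with compact preimage by the same iterated one-parameter continuity-of-radius argument. Your presentation of $S_Y$ locally as a graph over a box is only a cosmetic repackaging of the paper's successive slicing by hyperplanes, and the gluing difficulty you flag is exactly the one the paper resolves by induction on the parameters.
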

\begin{dem}
Since we enunciate the result for all $n\geq1$, we start
stressing that for $n=1$ the result is trivial, the case
$n=2$ was proved by Jelonek in \cite{J} just with the
codimension hypothesis and finally the case $n=3$ is a
consequence of a result in \cite{GM} by Gutierrez and
Maquera.

To deal with $n\geq 4$ case, suppose that $Y$ is not
bijective. By Theorem \ref{teo:J1}, we must have
cod$(S_Y)=2$. Since $Y$ is a local diffeomorphism $Y(\R^n)$
is open and applying Theorem \ref{teo:J2} we obtain that
\begin{enumerate}
    \item[(a)] $Y(\R^n)\supset\R^n\setminus S_Y$.
\end{enumerate}

Here again the idea is construct a compact neighborhood in
$\R^n$ of a not proper point $p\in S_Y$ that has compact
pre-image, which is a contradiction.

By Theorem \ref{teo:J2} and Lemma \ref{lem:isomorphism}, we
may suppose that $S_Y$ contains an analytic regular curve
$\gamma:(a_1-\delta_1,a_1+\delta_1)\to\R^n$ meeting the
hyperplane $\{x_1=a_1\}$ transversally at the point
$p=\gamma(a_1)=(a_1,\ldots,a_n)$. Furthermore, for a
sufficiently small $\delta>0$, we can suppose that the
$(n-1)$-disc $D(a_1)=\{a_1\}\times D \subset \{x_1=a_1\}$,
where $D$ is the $(n-1)$-disc of $\R^{n-1}$ centered at
$(a_2,\ldots,a_n)$ with radius $\delta$, satisfies:
\begin{enumerate}
    \item[(b)] $\gamma(a_1)=D(a_1)\cap\gamma$, $C(a_1)\cap\gamma=\vazio$,
    where $C(a_1)$ is the boundary of $D(a_1)$, and $\Gamma=D(a_1)\cap S_Y\cap\{x_2=t_2\}\cap\ldots\cap\{x_{n-3}=t_{n-3}\}$,
     where $t_j\in(a_j-\delta,a_j+\delta)$ is fixed for each $j\in\{2,\ldots,n-3\}$, has
    injective projection at the $x_{n-2}$-axis. That is, exist
    $\eps_1,\eps_2>0$ such that
    $$\Pi_{n-2}|_{\Gamma}:\Gamma\to[a_{n-2}-\eps_1,a_{n-2}+\eps_2]$$
    is bijective and so is an homeomorphism with inverse
    $$\varphi_{n-2}:[a_{n-2}-\eps_1,a_{n-2}+\eps_2]\to \Gamma,$$
    that associates each $t_{n-2}\in[a_{n-2}-\eps_1,a_{n-2}+\eps_2]$ to
    $\varphi_{n-2}(t_{n-2})=(\Pi_{n-2}|_{\Gamma})^{-1}(t_{n-2})=\Pi_{n-2}^{-1}(t_{n-2})\cap \Gamma$.
\end{enumerate}

In this way, to each
$t_{n-2}\in(a_{n-2}-\eps_1,a_{n-2}+\eps_2)$ we can associate
an unique 2-disc $D(a_1,t_2,\ldots,t_{n-2})\subset D(a_1)\cap
\{x_2=t_2\}\cap\ldots\cap\{x_{n-3}=t_{n-3}\}\cap\{x_{n-2}=t_{n-2}\}$
which satisfies $D(a_1,t_2,\ldots,t_{n-2})\cap
S_Y=\{\varphi_{n-2}(t_{n-2})\}$ and
$C(a_1,t_2,\ldots,t_{n-2})\cap S_Y=\vazio$, where
$C(a_1,t_2,\ldots,t_{n-2})$ is the boundary of
$D(a_1,t_2,\ldots,t_{n-2})$.

Since $C(a_1,t_2,\ldots,t_{n-2})$ is a circle contained in
$\{x_1=a_1\}\cap\{x_2=t_2\}\cap\ldots\cap\{x_{n-2}=t_{n-2}\}$,
we have that $Y^{-1}(C(a_1,t_2,\ldots,t_{n-2}))\subset
f_1^{-1}(a_1)\cap f_2^{-1}(t_2)\cap\ldots\cap
f_{n-2}^{-1}(t_{n-2})$, that means
$Y^{-1}(C(a_1,t_2,\ldots,t_{n-2}))$ is contained in leaves of
$\F_{1,2,\ldots,n-2}$. By hypothesis such leaves are simply
connected and by Proposition \ref{pro:compact} they are not
compact and so homeomorphic to planes. Therefore following
the reasoning of \cite{GM}, or equivalently, of Theorem
\ref{teo:injectivity4}, we have that
\begin{enumerate}
    \item[(c)] $Y^{-1}(C(a_1,t_2,\ldots,t_{n-2}))$ is the
    union of finitely many embedded circles
    $$C_1(a_1,t_2,\ldots,t_{n-2}),\ldots,C_k(a_1,t_2,\ldots,t_{n-2})$$
    and each of them bounds a 2-disc
    $$D_1(a_1,t_2,\ldots,t_{n-2}),\ldots,D_k(a_1,t_2,\ldots,t_{n-2})$$
    contained on $f_1^{-1}(a_1)\cap f_2^{-1}(t_2)\cap\ldots\cap
f_{n-2}^{-1}(t_{n-2})$ satisfying:
$$Y^{-1}(D(a_1,t_2,\ldots,t_{n-2}))=\displaystyle\cup_{j=1}^{k}D_j(a_1,t_2,\ldots,t_{n-2}).$$
\end{enumerate}

Proceeding as in Theorem \ref{teo:injectivity4}, we may
choose $\delta_{n-2}<$min$\{\eps_1,\eps_2\}$ such that for
all
$t_{n-2}\in[a_{n-2}-\delta_{n-2},a_{n-2}+\delta_{n-2}]=I_{n-2}$,
we have
$Y^{-1}(D(a_1,t_2,\ldots,t_{n-2}))=\cup_{j=1}^kD_j(a_1,t_2,\ldots,t_{n-2})$
for the same $k$, once that the number of pre-image of $Y$ is
locally constant at proper points.

\begin{enumerate}
    \item [(d)] Therefore the set $W_{n-2}(a_1,t_2,\ldots,t_{n-3})=\cup_{t_{n-2\in
    I_{n-2}}}D(a_1,t_2,\ldots,t_{n-2})$ is a 3-compact set
    that contains $p\in S_Y$ and
    $Y^{-1}(W_{n-2}(a_1,t_2,\ldots,t_{n-3}))$ is an union of
    $k$ 3-compact sets of $\R^n$.
\end{enumerate}

In the same way,
\begin{enumerate}
    \item [(e)] we may choose $\delta_{n-3}>0$ such that for all
$t_{n-3}\in[a_{n-3}-\delta_{n-3},a_{n-3}+\delta_{n-3}]=I_{n-3}$,
the set $W_{n-2}(a_1,t_2,\ldots,t_{n-3})$ as defined in item
(d) satisfies all properties described in this item. Hence,
we can define
$$W_{n-3}(a_1,t_2,\ldots,t_{n-4})=\cup_{t_{n-3}\in
I_{n-3}}W_{n-2}(a_1,t_2,\ldots,t_{n-3})$$
which is a
4-compact set containing $p\in S_Y$ and
$Y^{-1}(W_{n-3}(a_1,t_2,\ldots,t_{n-4}))$ is an union of $k$
4-compact sets of $\R^n$.
\end{enumerate}

After $n-3$ steps as (e), we have constructed the set
$W_2(a_1)=\cup_{t_2\in I_2}W_3(a_1,t_2)$ which is an
$(n-1)$-compact set containing $p\in S_Y$ and such that
$Y^{-1}(W_2(a_1))$ is an union of $k$ $(n-1)$-compact sets of
$\R^n$.

Finally, for $\delta_1>0$ small enough, we may repeat the
reasoning at each step above to obtain for each
$t_1\in[a_1-\delta_1,a_1+\delta_1]=I_1$ the set $W_2(t_1)$,
which is an $(n-1)$-compact containing $p$ and with pre-image
of $Y$ equal the union of $k$ $(n-1)$-compact sets of $\R^n$.
Therefore, $W_1=\cup_{t_1\in I_1} W_2(t_1)$ is the required
compact neighborhood in $\R^n$ of $p\in S_Y$, since
$Y^{-1}(W_1)$ is an union of $k$ $n$-compact sets and it is
compact.
\end{dem}


\section{Semialgebraic mappings}
\label{sec:sa}

In this section we establish analogous results on global
injectivity and bijectivity to \emph{semialgebraic mappings}
$Y:\R^n\to\R^n$, which are mappings with semialgebraic
graphs. In the following we describe some facts about
semialgebraic sets and mappings.

Every semialgebraic set $S\subset\R^n$ allow finite
stratification, that means, can be decomposed as a disjoint
finite union $S=\cup_{i=1}^d S_i$ where each $S_i$ is an
analytical submanifold of $\R^n$. In particular, every
semialgebraic discrete subset $A\subset\R^n$ is finite. If
$Y=(f_1,\ldots,f_n):\R^n\to\R^n$ is semialgebraic then
$Y^{-1}(S)$ is a semialgebraic set for all $S\subset\R^n$
semialgebraic.

The next result can be found at \cite{C} and allows us to
obtain the finiteness of the fibers as in Theorem
\ref{teo:finite}.

\begin{teo}
\label{teo:hardt}[Hardt] Let $A\subset\R^n$ be a
semialgebraic set and $Y:A\to\R^m$, a continuous
semialgebraic mapping. There is a finite semialgebraic
partition of $\R^m$ into $S_1,\ldots,S_d$ such that the
restriction of $Y$ to each $Y^{-1}(S_i)$ is a globally
trivial bundle. In particular, given $p,q\in S_i$,
$Y^{-1}(p)$ is homeomorphic to $Y^{-1}(q)$.
\end{teo}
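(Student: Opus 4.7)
The strategy is to derive the theorem from a \emph{cylindrical semialgebraic decomposition} adapted to $Y$. Passing to the graph $\Gamma=\{(x,Y(x)):x\in A\}\subset A\times\R^m$ and the product projection $\pi:A\times\R^m\to\R^m$, it suffices to produce the trivialization for $\pi|_\Gamma$, which is semialgebraically conjugate to $Y$.

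First I would invoke the existence of a cylindrical algebraic decomposition of $\R^n\times\R^m$ compatible with $\Gamma$ and refining the projection onto $\R^m$. This yields finite semialgebraic partitions
\[
\R^m=\bigsqcup_{i=1}^d S_i,\qquad \R^n\times\R^m=\bigsqcup_\alpha C_\alpha,
\]
with each $S_i$ and each $C_\alpha$ a semialgebraic cell (homeomorphic to some $(0,1)^k$), each $C_\alpha$ either contained in $\Gamma$ or disjoint from it, each $\pi(C_\alpha)$ equal to some $S_{i(\alpha)}$, and each $C_\alpha$ having the explicit cylinder form over $S_{i(\alpha)}$ determined by continuous semialgebraic graphs $g^\pm_\alpha$ and bands $g^-_\alpha<\cdot<g^+_\alpha$. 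This explicit cylinder shape makes $\pi|_{C_\alpha}:C_\alpha\to S_{i(\alpha)}$ into a trivial bundle with cell fibre, via vertical rescaling through the defining graphs.

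Second, I would splice these cell-level trivializations over a common base $S_i$ into a global one. The finitely many cylinders $C_\alpha\subset\Gamma$ with $i(\alpha)=i$ stack in a controlled way because the CAD is compatible with passage to closures: the boundary of each cylinder is a union of lower-dimensional cylinders over the same $S_i$. An induction on the depth of the stack, combined with semialgebraic vertical straightenings that agree on shared boundary graphs, yields a semialgebraic homeomorphism $\pi^{-1}(S_i)\cap\Gamma\cong S_i\times F_i$ over $S_i$, where $F_i$ is the fibre at a chosen basepoint $s_0\in S_i$. Transporting through the identification $\Gamma\cong A$ produces the trivialization of $Y|_{Y^{-1}(S_i)}$, and the homeomorphism $Y^{-1}(p)\cong Y^{-1}(q)$ for $p,q\in S_i$ then reads off. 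The main obstacle is precisely the coherent gluing along shared graphs of adjacent cylinders; I expect to handle it through the stratified compatibility built into the CAD, which forces boundary values of the straightenings on neighbouring cells to match canonically.
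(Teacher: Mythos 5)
First, a point of comparison: the paper does not prove this statement at all. It is Hardt's semialgebraic triviality theorem, quoted from Coste's notes \cite{C} and used as a black box (its only role in the paper is to yield Corollaries \ref{cor:finitesa} and \ref{cor:localconstante}). So you are attempting to reprove a deep cited result from scratch, and while the architecture of your outline (pass to the graph, take a cylindrical decomposition of $\R^m\times\R^n$ with the target coordinates first so that it induces the partition $S_1,\ldots,S_d$ of $\R^m$, trivialize cell by cell, then glue) matches the general shape of the known proofs, it has a genuine gap at exactly the step you flag as ``the main obstacle.''

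The gap is the gluing. A CAD does trivialize the projection on each individual cell: a cell lying over $S_i$ is an iterated graph/band and is semialgebraically homeomorphic over $S_i$ to $S_i\times(0,1)^k$. But the compatibility you invoke to splice these trivializations does not hold for a bare CAD: (i) the closure of a cell need not be a union of cells, so there is no ``stratified compatibility with passage to closures'' to appeal to without further refinement; (ii) the continuous semialgebraic functions whose graphs and bands form the stack over an intermediate-level cell $D$ need not extend continuously to $\overline{D}$; and (iii) the cardinality of the stack can jump between adjacent cells $D$ and $D'$ lying over the same $S_i$, so the boundary values of your vertical straightenings on neighbouring cylinders have nothing canonical to match to. As a result the piecewise-defined rescalings are in general discontinuous across the interfaces, and the ``induction on the depth of the stack'' does not close. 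This is precisely where the content of Hardt's theorem is concentrated: the standard proofs (Hardt's original via controlled stratified data, or the Coste/Bochnak--Coste--Roy argument by induction on $m$, trivializing one target variable at a time and refining the partition of the base at each stage) are organized specifically to get around the failure of (i)--(iii), not to exploit them. As written, your argument establishes triviality of $\pi|_\Gamma$ over $S_i$ only cell by cell, which shows that all fibres over $S_i$ carry combinatorially identical cell decompositions but does not produce a global homeomorphism $Y^{-1}(S_i)\cong S_i\times F_i$ over $S_i$, nor even, without the gluing, the homeomorphism $Y^{-1}(p)\cong Y^{-1}(q)$. Given that the result is classical and is cited in the paper, the appropriate course is to reference \cite{C} rather than to supply a proof; if you do want to write one out, the gluing must be confronted head on.
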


\begin{cor}
\label{cor:finitesa} If $Y:\R^n\to\R^n$ is a semialgebraic
local homeomorphism, then there exists a $k\in\N$ such that
$\# Y^{-1}(p)\leq k$, for all $q\in\R^n$.
\end{cor}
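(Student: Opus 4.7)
The plan is to invoke Hardt's Theorem (Theorem \ref{teo:hardt}) directly with $A=\R^n$ and $m=n$. This gives a finite semialgebraic partition $\R^n=S_1\cup\cdots\cup S_d$ such that, for each $i$, the restriction $Y|_{Y^{-1}(S_i)}:Y^{-1}(S_i)\to S_i$ is a globally trivial bundle. In particular, any two fibers $Y^{-1}(p)$ and $Y^{-1}(q)$ with $p,q\in S_i$ are homeomorphic, so they have the same cardinality whenever that cardinality is finite.

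The next step is to argue that each fiber $Y^{-1}(p)$ is actually finite. Since $Y$ is semialgebraic and the singleton $\{p\}$ is semialgebraic, the preimage $Y^{-1}(p)$ is a semialgebraic subset of $\R^n$. Because $Y$ is a local homeomorphism, this preimage is also discrete in $\R^n$. Applying the fact recalled at the beginning of Section \ref{sec:sa} that every discrete semialgebraic subset of $\R^n$ is finite, one concludes that $\#Y^{-1}(p)<\infty$ for every $p$.

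Combining the two observations, for each $i\in\{1,\ldots,d\}$ there is a well-defined finite integer $k_i=\#Y^{-1}(p)$ common to all $p\in S_i$. Setting $k=\max\{k_1,\ldots,k_d\}$ then gives $\#Y^{-1}(p)\leq k$ for every $p\in\R^n$, as required.

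The only conceptual point that needs care is the finiteness of a single fiber; once that is in hand, Hardt's trivialization propagates finiteness with a uniform bound across each stratum, and finiteness of the partition closes the argument. I do not expect any serious obstacle here, since both ingredients (discreteness from the local-homeomorphism hypothesis and the semialgebraic finiteness principle) are already built into the setting recalled in this section.
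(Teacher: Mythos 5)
Your proposal is correct and follows essentially the same route as the paper: establish that each fiber is a discrete semialgebraic set, hence finite, then use Hardt's Theorem to get a finite stratification over which the fiber cardinality is constant, and take the maximum over the finitely many strata. The only difference is presentational (you invoke Hardt first and finiteness second, and spell out the final maximum explicitly), so there is nothing to add.
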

\begin{dem}
Since $Y$ is a semialgebraic local homeomorphism, the
semialgebraic subsets $Y^{-1}(p)$ with $p\in\R^n$ are
discrete, in particular, finite. By Hardt's Theorem, there
exists a finite partition of $\R^n$ in semialgebraic subsets
$S_1,\ldots,S_k$ in a such way that given $p,q\in S_i$,
$Y^{-1}(p)$ is homeomorphic to $Y^{-1}(q)$, in particular
$Y^{-1}(p)$ and $Y^{-1}(q)$ have the same cardinality and
this prove the corollary.
\end{dem}

Another consequence of Hardt's Theorem is:
\begin{cor}
\label{cor:localconstante} Let $Y:\R^n\to\R^n$ be a
semialgebraic local diffeomorphism. Then $Y$ is proper at
$p_0\in\R^n$ if, and only if, the function $h:\R^n\to\N$ that
associated each $p\in\R^n$ to $h(p)=\# X^{-1}(p)$ is locally
constant at $p_0$.
\end{cor}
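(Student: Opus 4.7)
The plan is to prove the two directions of the equivalence separately, both relying on the local diffeomorphism structure of $Y$ combined with the Hausdorff property of $\R^n$ and, in one direction, the simple connectedness of Euclidean balls. Hardt's theorem will not be explicitly invoked in the argument itself; the semialgebraic hypothesis enters only through Corollary \ref{cor:finitesa} which guarantees finiteness of fibers, although here the local diffeomorphism hypothesis alone already suffices for the local argument.

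For the implication ``proper at $p_0$ $\Rightarrow$ $h$ locally constant at $p_0$'', I would start with an open neighborhood $V$ of $p_0$ with $Y^{-1}(\overline{V})$ compact, and restrict to a small open ball $B = B(p_0, r) \subset V$. The restriction $Y : Y^{-1}(B) \to B$ is then a local diffeomorphism onto $B$ which is proper (since $Y^{-1}(\overline{B})$ is compact as a closed subset of $Y^{-1}(\overline{V})$). A proper local homeomorphism between Hausdorff manifolds is a finite-sheeted covering map, and $B$ being simply connected forces this covering to be trivial. Hence $Y^{-1}(B)$ is a finite disjoint union of open sets each mapped diffeomorphically onto $B$, and $h$ is constant on $B$.

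For the converse, suppose $h \equiv k$ on an open neighborhood $U$ of $p_0$, and pick $r > 0$ so that $\overline{B(p_0, r)} \subset U$. The goal is to show $Y^{-1}(\overline{B(p_0, r)})$ is compact, which I would do via sequential compactness. Given a sequence $(x_n)$ in this preimage, pass to a subsequence so that $Y(x_n) \to y^* \in \overline{B(p_0, r)}$. Writing $Y^{-1}(y^*) = \{q_1, \ldots, q_m\}$ with $m \leq k$, the local diffeomorphism property yields pairwise disjoint open $V_i \ni q_i$ with compact closures and a common open image $W \subset U$ of $y^*$ such that each $Y|_{V_i} : V_i \to W$ is a diffeomorphism. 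The key step is then to use the constancy $h \equiv k$ on $W$: each $y \in W$ already has $m$ preimages in $\bigcup V_i$, so $m = k$ and moreover $Y^{-1}(W) \subset \bigcup_{i=1}^{k} V_i$. Eventually $Y(x_n) \in W$, forcing the tail of $(x_n)$ into the relatively compact set $\bigcup V_i$, which produces a convergent subsequence.

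The main obstacle I expect is exactly that last step, making sure no preimage of $y^*$ is ``missed'' and no preimage of a nearby $y$ can escape to infinity. The cleanest way to rule this out is the combined use of two facts: $Y$ being a local diffeomorphism gives $k$ preimages of nearby points living in the prescribed chart neighborhoods $V_i$, while the hypothesis $h \equiv k$ certifies that these account for all preimages. Without the local constancy of $h$, a sequence $(x_n)$ of preimages could drift off to infinity even while $Y(x_n)$ converges, which is precisely the failure of properness; the equivalence asserts that controlling $h$ is exactly what prevents this pathology.
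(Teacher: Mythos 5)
Your proof is correct, but it is worth noting that the paper itself gives no proof of this corollary at all: it simply introduces it with the phrase ``another consequence of Hardt's Theorem'' and moves on. Your argument is therefore a genuinely different (and fully written-out) route. You prove both directions by purely topological means: the forward direction via the standard fact that a proper local homeomorphism onto a connected, simply connected base is a trivial finite covering, and the converse via a sequential-compactness argument in which the local constancy $h\equiv k$ is used exactly once, to certify that the $k$ preimages of $y^*$ captured in the chart neighborhoods $V_i$ exhaust $Y^{-1}(W)$, so that no preimage of $Y(x_n)$ can escape to infinity. Semialgebraicity plays no role in your argument beyond guaranteeing that $h$ is $\N$-valued, and even that is supplied in each direction by the respective hypothesis (properness at $p_0$ forces the fiber over $p_0$ to be compact and discrete, hence finite; local constancy of an $\N$-valued $h$ gives finiteness by assumption). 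What your approach buys is generality and transparency: the equivalence holds for an arbitrary $C^1$ local diffeomorphism with finite fibers, with no stratification theory needed. What the paper's intended route via Hardt's Theorem would buy is uniformity: Hardt gives a single finite partition of the target over which $Y$ is a trivial bundle, so $h$ is globally piecewise constant on finitely many strata, which is the stronger structural fact the authors exploit elsewhere (e.g.\ in Corollary \ref{cor:finitesa}); but deducing the pointwise equivalence with properness from that stratification would still require essentially the local covering argument you supply. The one phrase I would adjust is your remark that ``the local diffeomorphism hypothesis alone already suffices'': without some finiteness input a local diffeomorphism of $\R^n$ can have infinite fibers, so the statement as posed (with $h:\R^n\to\N$) tacitly presupposes what Corollary \ref{cor:finitesa} provides; your proof handles this correctly, but the remark as phrased slightly overstates the case.
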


To prove that in the case of semialgebraic mappings the set
of not proper points $S_Y$ is also semialgebraic, we shall
need the following result given, for example, in \cite{C}:

\begin{teo}\label{teo:TS}[Tarski-Seidenberg]
Let $A$ be a semialgebraic subset of $\R^n\times\R^m$ and
$\pi:\R^n\times\R^m\to\R^m$ the projection $\pi(p,q)=q$. Then
$\pi(A)$ is a semialgebraic subset of $\R^m$.
\end{teo}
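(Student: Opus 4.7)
The plan is to prove Tarski-Seidenberg by quantifier elimination for the theory of real closed fields, reducing first to the case of projecting a single coordinate and then invoking the Sturm-Tarski sign-counting machinery. By iterating the projection $\R^{k+1}\to\R^k$ one reaches the general projection $\R^n\times\R^m\to\R^m$, so I would focus on eliminating one existential real variable $t$ at a time.

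First I would write $A\subset\R\times\R^m$ as a finite union of basic semialgebraic sets of the form $\bigcap_{i=1}^r\{(t,q):P_i(t,q)\bowtie_i 0\}$ with $P_i\in\R[t,q_1,\ldots,q_m]$ and $\bowtie_i\in\{=,>\}$; since projection commutes with union it suffices to describe the $q$-projection of one such basic set. Concretely, I need to characterize those $q\in\R^m$ for which the univariate polynomials $P_i(\,\cdot\,,q)\in\R[t]$ admit a common $t\in\R$ realizing the prescribed sign pattern.

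The crucial ingredient is the Sturm-Tarski theorem: for univariate polynomials in $\R[t]$, the number of real $t$ realizing a given sign pattern $\sigma\in\{-,0,+\}^r$ on $(P_1,\ldots,P_r)$ is an explicit function of the signs of certain subresultants (equivalently, of the leading coefficients in a Sturm-like remainder sequence). Hence ``there exists such a $t$'' translates into a Boolean combination of polynomial inequalities on the coefficients of the $P_i$. Since those coefficients are polynomials in $q$, the $q$-projection is semialgebraic in $\R^m$, which is what I want.

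The main obstacle, and the part that demands the most care, is the degenerate loci: the leading coefficient of $P_i(\,\cdot\,,q)$ in $t$ can vanish on a semialgebraic subset of the parameter space, causing degree drops that invalidate a uniform Sturm-type formula. The clean remedy is cylindrical algebraic decomposition: partition $\R^m$ into finitely many semialgebraic cells on each of which every $P_i(\,\cdot\,,q)$ has constant $t$-degree and constant root multiplicities. Applying Sturm-Tarski on each cell and taking the union of the cells where the existential condition is satisfied produces an explicit semialgebraic description of $\pi(A)$, completing the proof.
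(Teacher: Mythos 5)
The paper does not prove this statement at all: it is quoted as a background result from Coste's notes \cite{C} (it is the classical Tarski--Seidenberg projection theorem), so there is no in-paper argument to compare yours against. Judged on its own, your outline is the standard quantifier-elimination proof and is essentially sound: reduce to eliminating one real variable at a time, split $A$ into basic sets, and use Sturm--Tarski/subresultant sign determination to express ``there exists $t$ realizing this sign condition on $P_1(\cdot,q),\dots,P_r(\cdot,q)$'' as a Boolean combination of polynomial sign conditions on the coefficients, hence on $q$.

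One caveat on your handling of the degenerate loci. Invoking cylindrical algebraic decomposition to deal with degree drops is heavier than necessary and is delicate to justify at this point, since the usual development of CAD is itself built on (or is equivalent to) the projection theorem you are trying to prove; unless you construct the decomposition directly via subresultants, you risk a circular dependency. The cleaner and standard remedy is a finite case distinction on the vanishing pattern of the leading coefficients: for each choice of which leading coefficients of the $P_i(\cdot,q)$ vanish (and hence which truncations are the ``true'' polynomials), the conditions defining that case are themselves polynomial equalities and inequalities in $q$, and on each such piece the Sturm--Tarski formula applies uniformly to the truncated family. Taking the finite union over all cases gives the semialgebraic description of $\pi(A)$ without appealing to CAD. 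With that substitution your argument is complete and matches the proof in \cite{C}.
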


\begin{pro}
\label{prop:sa} If $Y:\R^n\to\R^n$ is a semialgebraic
mapping, then $S_Y$ is semialgebraic.
\end{pro}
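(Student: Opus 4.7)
The plan is to represent $S_Y$ via a first-order formula in the language of ordered fields that uses only the (semialgebraic) graph $\Gamma_Y$ of $Y$ together with polynomial (in)equalities, and then conclude by Tarski--Seidenberg (Theorem \ref{teo:TS}) combined with closure of semialgebraic sets under finite Boolean operations.

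First I would rephrase the definition of $S_Y$ analytically. Taking the open balls as a neighborhood base at $y$ and using that $Y^{-1}(\overline{B(y,\epsilon)})$ is automatically closed in $\R^n$, the condition ``$Y^{-1}(\overline{B(y,\epsilon)})$ is non-compact'' is equivalent to ``non-bounded''. Hence $y \in S_Y$ if and only if for every $\epsilon>0$ and every $R>0$ there exists $x\in\R^n$ with $|x|>R$ and $|Y(x)-y|\leq \epsilon$.

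Next I would exhibit $S_Y$ explicitly from this characterization. Define
$$\tilde F := \{(y,R,\epsilon,x,z) \in \R^n \times \R \times \R \times \R^n \times \R^n : (x,z) \in \Gamma_Y,\ R > 0,\ \epsilon > 0,\ |x|^2 > R^2,\ |z-y|^2 \leq \epsilon^2\}.$$
Since $\Gamma_Y$ is semialgebraic by hypothesis and the remaining conditions are polynomial (in)equalities, $\tilde F$ is semialgebraic. Theorem \ref{teo:TS} then yields that its projection $F$ onto the $(y,R,\epsilon)$-coordinates is semialgebraic, and its complement $F^c$ inside $\R^n \times (0,\infty) \times (0,\infty)$ is again semialgebraic. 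Applying Theorem \ref{teo:TS} once more, the projection of $F^c$ onto the $y$-coordinate,
$$\{y \in \R^n : \exists\, R > 0,\ \exists\, \epsilon > 0,\ (y,R,\epsilon) \in F^c\},$$
is semialgebraic. By the analytic characterization of $S_Y$, this projection is exactly $\R^n\setminus S_Y$, so $S_Y$ is semialgebraic, as desired.

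The only conceptual subtlety is the handling of the two universal quantifiers in the analytic description of $S_Y$: Theorem \ref{teo:TS} eliminates only existential quantifiers directly. However, universal quantifiers are disposed of for free by passing to complements, which is legitimate because semialgebraic sets form a Boolean algebra. I do not foresee any real obstacle beyond being careful with this quantifier bookkeeping.
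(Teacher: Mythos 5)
Your proof is correct and follows the same strategy as the paper's: characterize $S_Y$ by an explicit first-order condition, realize the existential part as a Tarski--Seidenberg projection of a semialgebraic subset of a product space, and conclude. The one genuine difference is your handling of the universal quantifiers, and it matters. The paper's proof sets $\Sigma_Y=\{(q,p,\eps):\ |q|>1/\eps,\ |Y(q)-p|<\eps\}$ and then asserts $S_Y=\pi(\Sigma_Y)$ for the projection $\pi(q,p,\eps)=p$; as written this describes the set of $p$ for which the condition holds for \emph{some} $\eps>0$, not for \emph{all} $\eps>0$ (for $Y=\mathrm{id}$ and $p=0$ one has $0\in\pi(\Sigma_{\mathrm{id}})$ by taking $\eps=2$ and $q$ with $1/2<|q|<2$, yet $S_{\mathrm{id}}=\vazio$). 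Your extra step --- project to the $(y,R,\eps)$-space, take the complement there, project again, and take the complement in $\R^n$ --- is exactly what is needed to eliminate the $\forall\eps$ (and $\forall R$) correctly, using that semialgebraic sets form a Boolean algebra; so your writeup is in effect the repaired, complete version of the paper's argument. Two minor remarks: your identification of non-compactness of $Y^{-1}(\overline{B(y,\eps)})$ with unboundedness uses the continuity of $Y$ (implicit in the paper's setting, where properness is discussed for continuous maps), and your use of the graph $\Gamma_Y$ to certify that $\tilde F$ is semialgebraic is a clean substitute for the paper's appeal to the semialgebraicity of $\{f>0\}$ for a semialgebraic function $f$.
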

\begin{dem}
Note that $p\in S_Y$ if, and only if, for each $\eps>0$ there
exists $q\in\R^n$ such that $|Y(q)-p|<\eps$ and $|q|>1/\eps$.
Now, Let $\Sigma_Y$ denote the subset of
$\R^n\times\R^n\times(0,+\infty)$ defined by:
$$\Sigma_Y=\{(q,p,\eps)\in\R^n\times\R^n\times(0,+\infty);\
|q|>1/\eps,\ \text{and}\ |Y(q)-p|<\eps\}.$$

we claim that $\Sigma_Y$ is semialgebraic subset of
$\R^n\times\R^n\times(0,+\infty)$. This follows of the basic
fact that if $A$ is semialgebraic and $f:A\to\R$ is a
semialgebraic function, then the set $\{q\in A;\ f(p)>0\}$ is
semialgebraic (cf. \cite{C}). Since $S_Y=\pi(\Sigma_Y)$,
where $\pi$ is the projection
$$\pi:\R^n\times\R^n\times(0,+\infty)\to\R^n$$
defined by $\pi(x,y,\eps)=y$. The result follows directly
from the Tarski-Seidenberg's Theorem.
\end{dem}

In Lemma 8.1 of \cite{J}, Jelonek proved:

\begin{lem}\label{lem:connected} Let $S$ be a closed semialgebraic subset of $\R^n$. If $\codim(S)\geq2$,
then the set $\R^n\setminus S$ is connected. If $codim(S)\geq3$, then the set $\R^n\setminus S$ is
simply connected.
\end{lem}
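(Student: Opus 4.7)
The plan is to exploit the finite semialgebraic stratification recalled at the start of Section~\ref{sec:sa}: write $S = \bigcup_{i=1}^d S_i$ as a disjoint union of finitely many analytic submanifolds $S_i\subset\R^n$, each of dimension at most $\dim(S) = n - \codim(S)$. Since $\R^n$ is itself simply connected, both assertions will reduce to standard transversality arguments on paths and disks, once general position forces their images to miss each stratum on dimensional grounds.

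For the first assertion, fix $p,q\in\R^n\setminus S$ and consider a smooth path $\sigma_0:[0,1]\to\R^n$ with $\sigma_0(0)=p$, $\sigma_0(1)=q$. I would approximate $\sigma_0$ by a smooth path $\sigma:[0,1]\to\R^n$ with the same endpoints, coinciding with $\sigma_0$ on small neighborhoods of $0$ and $1$ (which already map into the open set $\R^n\setminus S$) and transverse to every stratum $S_i$. Because $\dim(\sigma) + \dim(S_i) \leq 1 + (n-2) = n-1 < n$, transversality forces $\sigma([0,1]) \cap S_i = \vazio$ for each $i$, so $\sigma$ lies entirely in $\R^n\setminus S$ and joins $p$ to $q$.

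For the second assertion, let $\gamma:S^1\to\R^n\setminus S$ be a continuous loop. By the simple connectedness of $\R^n$ it extends to a continuous map $F_0:D^2\to\R^n$. Since $\gamma(S^1)$ is compact and $S$ is closed, there is a collar neighborhood $U$ of $\partial D^2$ with $F_0(U)\subset\R^n\setminus S$. I would approximate $F_0$ by a smooth map $F:D^2\to\R^n$ coinciding with $F_0$ on a slightly smaller neighborhood of $\partial D^2$ and transverse to every stratum $S_i$. The dimension count $2 + (n-3) = n-1 < n$ again gives $F(D^2) \cap S_i = \vazio$, exhibiting a null-homotopy of $\gamma$ inside $\R^n\setminus S$.

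The main obstacle is not the dimension count itself but the simultaneous transversality with boundary control: the strata $S_i$ need not be closed in $\R^n$, so only their union $S$ is well-behaved under limits. I would address this by invoking the relative transversality theorem for the finite collection $S_1,\ldots,S_d$ at once: in the Whitney $C^\infty$ topology, maps transverse to a given smooth submanifold rel a closed subset form a residual set, and a finite intersection of residual sets is residual in a Baire space, so a single perturbation achieves transversality to all strata simultaneously. The endpoint or boundary condition is free in both arguments because $\R^n\setminus S$ is open and already contains an open neighborhood of the prescribed data.
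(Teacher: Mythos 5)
The paper does not prove this lemma at all: it is quoted verbatim as Lemma 8.1 of Jelonek's paper \cite{J}, so there is no internal argument to compare yours against. Judged on its own, your general-position proof is correct and is the standard way to establish both statements: stratify $S$ into finitely many analytic submanifolds $S_i$ of dimension at most $n-\codim(S)$, perturb a path (respectively a disk) rel a neighborhood of its endpoints (respectively its boundary) to be simultaneously transverse to all strata, and conclude from the dimension count $1+(n-2)<n$ (respectively $2+(n-3)<n$) that the perturbed map misses every $S_i$ and hence misses $S$. Your handling of the two real issues --- that the strata are not individually closed (so transversality to each is only residual, not open, and one must intersect finitely many residual sets in a Baire space rather than perturb stratum by stratum) and that the perturbation must be relative to data already landing in the open set $\R^n\setminus S$ --- is exactly right.

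Two small points of hygiene. First, in the simple-connectedness step you cannot literally take $F$ to coincide with the merely continuous $F_0$ near $\partial D^2$ and then apply the transversality theorem, since that theorem needs a smooth map; the standard fix is to first homotope $\gamma$ within the open set $\R^n\setminus S$ to a smooth loop, extend it smoothly over $D^2$, and only then perturb rel a collar of the boundary on which the image already avoids $S$. Second, you should say explicitly that $\dim S=\max_i\dim S_i$ for the stratification, so that the hypothesis $\codim(S)\geq 2$ (resp.\ $\geq 3$) really does bound the dimension of every stratum; with that noted, the argument is complete.
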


Now, let $S_1,\ldots,S_d$ be a family of analytical
submanifolds of $\R^n$ all of codimension $r$ with $1\leq
r\leq n-1$, $p\in S_i$, for some $i\in\{1,\ldots,d\}$, and
$G$ the Grassmannian of $r-$planes in $\R^n$ with origin at
$p$. By Thom's Transversality Theorem the subset of $G$
defined by $T=\{L\in G;\ L \text{ is transversal to each }
S_i\}$ is a residual subset. In particular, the set $T$ is
not empty. This establishes the following result:

\begin{pro}
\label{pro:trans} Let $S_1,\ldots,S_d$ be a family of
analytical submanifolds of $\R^n$ all of codimension $1\leq
r\leq n-1$ and $p\in S_i$, for some $i\in\{1,\ldots,d\}$.
Then there exists a $r-$dimensional affine plane passing
trough $p$ intersecting transversally each $S_j$. In
particular, the intersection of a plane $L\in T$ with $S_j$
is a discrete set to each $S_j$.
\end{pro}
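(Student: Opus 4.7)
The plan is to observe that this is essentially a direct packaging of Thom's Transversality Theorem applied to each submanifold $S_j$ separately, followed by a Baire category argument to find a single plane transverse to all of them, and finally a dimension count to obtain discreteness.

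First I would fix $p \in S_i$ and consider the Grassmannian $G$ of all $r$-dimensional affine planes of $\R^n$ containing $p$, which is a smooth compact manifold (diffeomorphic to the usual Grassmannian of linear $r$-planes in $\R^n$ via the correspondence $L \mapsto L - p$). For each fixed $j \in \{1,\ldots,d\}$, I would apply Thom's Transversality Theorem to conclude that the set
\[
T_j = \{L \in G \;:\; L \pitchfork S_j\}
\]
is open and dense in $G$; the argument here is the standard one, viewing the inclusion $L \hookrightarrow \R^n$ as being perturbed within the finite-dimensional family parametrized by $G$, with density coming from Sard's theorem applied to an evaluation map and openness from the stability of transverse intersections for compactly supported perturbations (the ``density'' at least suffices for us).

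Next I would take the finite intersection $T = \bigcap_{j=1}^d T_j$. Since $G$ is a Baire space and each $T_j$ is open and dense, $T$ is residual, and in particular nonempty. Choose any $L \in T$; by construction $L$ passes through $p$ and is transversal to every $S_j$, which is precisely the first assertion of the proposition.

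For the ``in particular'' clause, I would do the dimension count. Fix $L \in T$ and $j \in \{1,\ldots,d\}$. Since $\dim L = r$ and $\codim S_j = r$ (so $\dim S_j = n-r$), transversality at any point $q \in L \cap S_j$ gives $T_q L + T_q S_j = T_q \R^n$, hence
\[
\dim(T_q L \cap T_q S_j) = r + (n-r) - n = 0.
\]
Thus $L \cap S_j$ is a submanifold of dimension $0$, i.e.\ a discrete subset of $\R^n$, as claimed. No step here is a real obstacle; the only subtle point is invoking Thom's Transversality correctly for a parametrized family of affine planes through a fixed point, and the rest is Baire category plus linear algebra.
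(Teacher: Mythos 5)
Your proposal is correct and follows essentially the same route as the paper: the paper also works in the Grassmannian $G$ of $r$-planes through $p$, invokes Thom's Transversality Theorem to conclude that the set $T$ of planes transversal to every $S_j$ is residual (hence nonempty), and obtains discreteness from the dimension count $r+(n-r)-n=0$. Your version merely spells out the intersection $T=\bigcap_j T_j$ and the Baire argument a bit more explicitly than the paper does.
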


On global injectivity results, we start observing that the Theorem \ref{teo:J1} is partially
true to semialgebraic maps. In fact, following its proof in
\cite{J} we obtain the global injectivity result:

\begin{teo}
\label{teo:J1sa} If $Y:\R^n\to\R^n$ is a semialgebraic local
homeomorphism and $\codim(S_Y)\geq3$, then $Y$ is injective.
\end{teo}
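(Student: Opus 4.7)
The plan is to adapt the polynomial argument of Theorem \ref{teo:J1} to the semialgebraic setting, using covering space theory applied to $Y$ restricted to $Y^{-1}(U)$, where $U := \R^n \setminus S_Y$. First I would note that by the very definition of $S_Y$, the restriction $Y|_{Y^{-1}(U)}\colon Y^{-1}(U) \to U$ is proper; combined with the local homeomorphism hypothesis and the uniform fiber-cardinality bound of Corollary \ref{cor:finitesa}, this makes $Y|_{Y^{-1}(U)}$ a finite-sheeted covering map in the classical topological sense. Since $\codim(S_Y) \geq 3$, Lemma \ref{lem:connected} asserts that $U$ is both connected and simply connected, so this covering has a well-defined constant number of sheets $k \geq 1$.

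Because the base $U$ is simply connected, every finite covering of $U$ is trivial, so $Y^{-1}(U)$ decomposes as a disjoint union of $k$ open sets, each mapped homeomorphically by $Y$ onto $U$. To force $k = 1$ I would exploit the $\codim \geq 3$ hypothesis a second time in a different direction: since $Y$ is a semialgebraic map, $Y^{-1}(S_Y)$ is semialgebraic (the preimage of a semialgebraic set under a semialgebraic map), and because a local homeomorphism preserves local topological dimension, $\codim(Y^{-1}(S_Y)) \geq \codim(S_Y) \geq 3$. A second application of Lemma \ref{lem:connected}, now to $Y^{-1}(S_Y) \subset \R^n$, yields that $Y^{-1}(U) = \R^n \setminus Y^{-1}(S_Y)$ is connected, hence $k = 1$ and $Y|_{Y^{-1}(U)}$ is injective.

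Finally I would promote injectivity on $Y^{-1}(U)$ to injectivity on all of $\R^n$. Suppose for contradiction that $Y(p) = Y(q)$ with $p \neq q$; choose disjoint open neighborhoods $V_p, V_q$ of $p, q$ on which $Y$ is a homeomorphism. Since $\codim(S_Y) \geq 3$ implies that $S_Y$ has empty interior, the open set $Y(V_p) \cap Y(V_q)$ must contain some point $s' \in U$, which pulls back to two distinct points in $V_p$ and $V_q$ both lying in $Y^{-1}(U)$ — contradicting the injectivity established in the previous paragraph.

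The most delicate step I expect is the second invocation of Lemma \ref{lem:connected}, which requires verifying that semialgebraic codimension is preserved under pullback by a semialgebraic local homeomorphism; one should also be careful that ``proper local homeomorphism with finite fibers equals finite covering'' is invoked in the right topological generality. Modulo these foundational points, the proof is a fairly mechanical transcription of Jelonek's covering-space reasoning into the semialgebraic category, with Corollary \ref{cor:finitesa}, Proposition \ref{prop:sa}, and Lemma \ref{lem:connected} serving as the semialgebraic substitutes for the polynomial tools used in Theorem \ref{teo:J1}.
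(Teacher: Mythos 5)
Your proof is correct and is essentially the argument the paper has in mind: the paper gives no proof of its own for this theorem, stating only that one obtains it by ``following its proof in \cite{J}'', and Jelonek's proof of Theorem \ref{teo:J1} is precisely this covering-space argument (properness of $Y$ over $\R^n\setminus S_Y$, triviality of the covering from simple connectedness via Lemma \ref{lem:connected}, $k=1$ from connectedness of $\R^n\setminus Y^{-1}(S_Y)$, then extension of injectivity across the codimension-$\geq 3$ set). The two foundational points you flag --- that a proper local homeomorphism onto a locally compact Hausdorff base is a finite covering, and that semialgebraic dimension (being equal to topological dimension) is preserved under a local homeomorphism so that $\codim(Y^{-1}(S_Y))\geq\codim(S_Y)$ --- both check out.
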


The only barrier to take bijection in this result is that
to semialgebraic mappings injectivity does not imply
surjectivity which is given, in the polynomial case, by
Theorem \ref{teo:BR}.

For the other hand, Kurdyka and Rusek in \cite{KR} studied
and presented a family of injective semialgebraic mapping
that are surjective, which includes all mappings with
algebraic graphs. They characterize this family using
properties of so-called arcwise symmetric sets. For the sake
of completeness we give this definition and some notions
necessary to enunciate their result. For more details see
\cite{KR}. A subset $E$ of a real analytic manifold $M$ is
said to be \emph{arcwise symmetric} if, for every analytic
mapping $\gamma:(-1,1)\to M$ the interior of the set
$\gamma^{-1}(E)$ is either empty, or covers the interval
$(-1,1)$. Suppose now that $X$ is a compact algebraic variety
and $i:\R^n\to X$ is a biregular isomorphism of $\R^n$ onto
$i(\R^n)$ which is Zariski dense and open in $X$. Then the
pair $(X,i)$ is called an \emph{algebraic compactification}
of $\R^n$. We are ready to see Theorem 4.1 of \cite{KR}:

\begin{teo}
\label{teo:KR} A continuous injective, semialgebraic mapping
$Y:\R^n\to\R^n$ is surjective if there exist an algebraic
compactification $(X,i)$ of $\R^n\times\R^n$ and $E$, arcwise
symmetric in $X$, such that $i(\R^n,Y(\R^n))=E\cap
i(\R^n\times\R^n)$.
\end{teo}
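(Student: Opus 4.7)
The plan is to prove Theorem~B by mimicking the argument of Theorem~A and replacing each polynomial tool by its semialgebraic counterpart (developed above), together with a covering-space argument in place of the Bialynicki-Birula--Rosenlicht Theorem~\ref{teo:BR} to upgrade ``$S_Y=\vazio$'' to bijectivity.

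The first step is to reduce the problem to showing $S_Y=\vazio$. Indeed, if $S_Y=\vazio$ then $Y$ is a proper local diffeomorphism from $\R^n$ onto an open and closed (by properness) subset of $\R^n$, hence onto all of $\R^n$; a proper local diffeomorphism from the connected space $\R^n$ onto the simply connected space $\R^n$ is a covering map, so must be a homeomorphism. Thus I suppose by contradiction that $Y$ is not bijective and $S_Y\ne\vazio$; by Proposition~\ref{prop:sa}, $S_Y$ is closed semialgebraic of codimension at least $2$.

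The second step is to reproduce, at a chosen $p\in S_Y$, the local setup from the proof of Theorem~A. Via the analytic stratification of $S_Y$ and the Curve Selection Lemma for semialgebraic sets, extract an analytic regular curve $\gamma$ in $S_Y$ through $p$. By a semialgebraic version of Lemma~\ref{lem:isomorphism} (applied with a linear transformation that permutes coordinates suitably), one may assume $\gamma$ meets $\{x_1=a_1\}$ transversally at $p=\gamma(a_1)$. Exactly as in item~(a) of the proof of Theorem~A---this only uses openness of $Y(\R^n)$, the containment $\partial Y(\R^n)\subset S_Y$, and the connectedness of $\R^n\setminus S_Y$ provided by Lemma~\ref{lem:connected}---one obtains $Y(\R^n)\supset\R^n\setminus S_Y$.

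The third step is to carry out the inductive thickening construction around $p$ verbatim, but with the semialgebraic tools: Corollary~\ref{cor:finitesa} furnishes a uniform bound $k$ on fiber cardinality; Corollary~\ref{cor:localconstante} ensures local constancy of $\#Y^{-1}$ on $\R^n\setminus S_Y$; Proposition~\ref{pro:trans} supplies the transverse affine planes needed to cut out $2$-discs $D(a_1,t_2,\ldots,t_{n-2})$ meeting $S_Y$ in one point and with boundary circles disjoint from $S_Y$; and Proposition~\ref{pro:compact} together with the hypothesis forces each leaf of $\F_{1,2,\ldots,n-2}$ to be non-compact and simply connected, hence homeomorphic to a plane. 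Consequently each such boundary circle lifts to $k$ embedded circles, each bounding a compact $2$-disc in a leaf of $\F_{1,2,\ldots,n-2}$, together filling $Y^{-1}(D(a_1,t_2,\ldots,t_{n-2}))$. Thickening progressively in $t_{n-2},t_{n-3},\ldots,t_1$ produces a compact neighborhood $W$ of $p$ in $\R^n$ whose $Y$-preimage is a finite union of compact sets, hence compact. This contradicts $p\in S_Y$, completing the proof.

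The main obstacle I anticipate is not the induction---which transcribes essentially line by line from the proof of Theorem~A---but rather the translation of the polynomial inputs of Jelonek (Theorems~\ref{teo:J1} and~\ref{teo:J2}) into semialgebraic ones: one must carefully establish the existence of the analytic regular curve $\gamma\subset S_Y$ through $p$ (via Curve Selection, with a separate argument when $\dim S_Y=0$ to rule out isolated points of $S_Y$) and the invariance of the hypotheses under an appropriate semialgebraic coordinate change. Once these are in hand, the rest proceeds by direct analogy with Theorem~A.
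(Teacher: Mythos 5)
The statement you were asked to prove is Theorem~\ref{teo:KR}, the Kurdyka--Rusek surjectivity criterion: a continuous \emph{injective} semialgebraic map $Y:\R^n\to\R^n$ is surjective provided there is an algebraic compactification $(X,i)$ of $\R^n\times\R^n$ and an arcwise symmetric set $E\subset X$ with $i(\R^n,Y(\R^n))=E\cap i(\R^n\times\R^n)$. Your proposal never engages with this statement at all: nowhere do you use the hypothesis of injectivity, the algebraic compactification $(X,i)$, the arcwise symmetric set $E$, or the identity $i(\R^n,Y(\R^n))=E\cap i(\R^n\times\R^n)$. What you have written is instead a proof sketch of Theorem~B (bijectivity of a $C^2$ semialgebraic local diffeomorphism with simply connected leaves of $\F_{i_1\ldots i_{n-2}}$ and $\codim(S_Y)\geq2$). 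That is a different theorem with different hypotheses --- Theorem~\ref{teo:KR} makes no local-diffeomorphism or foliation assumptions whatsoever, and conversely your argument cannot be repaired into a proof of it, since the whole content of the Kurdyka--Rusek result lies precisely in exploiting the arcwise symmetry of $E$ in the compactification, which your outline ignores.

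For context: the paper does not prove Theorem~\ref{teo:KR}; it is quoted verbatim as Theorem~4.1 of Kurdyka and Rusek \cite{KR}, where the proof rests on the theory of arcwise symmetric (arc-symmetric) sets --- roughly, one shows that the image $Y(\R^n)$, being cut out of $\R^n\times\R^n$ by an arcwise symmetric set in a compactification, is simultaneously open (by invariance of domain, using injectivity and continuity) and ``analytically closed'' in a sense forced by arc-symmetry, whence it must be all of $\R^n$. None of the machinery you invoke (Jelonek's theorems, the curve $\gamma\subset S_Y$, the disc-thickening induction, Proposition~\ref{pro:compact}) is relevant to that argument. Separately, even read as a proof of Theorem~B, your route diverges from the paper's: the paper's proof of Theorem~B does not perform the iterated thickening of Theorem~A, but argues directly that $\#Y^{-1}$ would be locally constant at a point of $S_Y$ (via a single transverse $2$-plane, Proposition~\ref{pro:trans}, and the planarity of the leaves), contradicting Corollary~\ref{cor:localconstante}; and it concludes surjectivity from $S_Y=\vazio$ via Hadamard's theorem rather than a covering-space argument. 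But the primary issue is that you have proved the wrong statement.
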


A class of continuous injective mappings
$Y:\R^n\to\R^n$ satisfying the additional condition of this theorem is
those with algebraic graphs. Furthermore, they gave the
example $Y(p)=\sqrt{p^2+1}-p$ from $\R$ to itself that is a
injective semialgebraic local diffeomorphism that is not
surjective.

Therefore with the additional condition just below discussed
we may obtain bijectivity on Theorem \ref{teo:J1sa}. In the
following we provide another such a sufficient condition.
This result also generalizes to semialgebraic mappings the
Theorem A.

\begin{thmB}
\label{teo:injectivitynsa} Let
$Y=(f_1,\ldots,f_n):\R^n\to\R^n$ be a $C²$ semialgebraic local
diffeomorphism such that for all $(n-2)$-combination
$\{i_1,\ldots,i_{n-2}\}$ of $\{1,\ldots,n\}$ the leaves of
$\F_{i_1\ldots i_{n-2}}$ are simply connected. If
$\codim(S_Y)\geq 2$ then $Y$ is a bijection.
\end{thmB}
\begin{dem}
We will prove that the set of not proper points $S_Y$ is
empty and $Y$ is a surjective map. So, by Hadamard's Theorem $Y$ is a global
diffeomorphism. Suppose, by contradiction, that
$S_Y\neq\vazio$ and that the $\codim(S_Y)=r\geq2$. We have
that $\R^n\setminus S_Y$ is connected and so by Corollary
\ref{cor:finitesa} there exists $k\in\N$ such that
$\# Y^{-1}(q)=k$, for all $q\in\R^n\setminus S_Y$. Given
$p\in S_Y$, by Lemma \ref{lem:isomorphism} and Proposition
\ref{pro:trans} we may assume that the $r-$dimensional plane
$L=\{(p_1,\ldots,p_{n-r},x_{n-r+1},\ldots,x_n) ;
x_{n-r+1},\ldots,x_{n}\in\R\}$ is transversal to all strata
of a stratification of $S_Y$. In particular, $S_Y$ intersects
the plane $L$ in a finite number of points. Hence, there
exists a circle $C$ in the $2-$dimensional plane
$\ell=\{(p_1,\ldots,p_{n-2},x_{n-1},x_n) ;
x_{n-1},x_{n}\in\R\}$ centered at $p$ such that $C\cap S_Y=
\vazio$. Since $C\cap S_Y=\vazio$ we have that $F^{-1}(C)$ is
the union of finitely many embedded circles $C_1,\ldots,C_k$.
Each $C_i$ is contained in a leaf of the foliation
$\F_{1\ldots n-2}$ (which is, by hypothesis and Proposition
\ref{pro:compact}, a foliation by planes), in particular
each $C_i$ is the boundary of a $2-$disc $D_i$ in the
respective leaf containing $C_i$. As $Y(D_i)$ covers the disc
in $\ell$ bounded by $C$, there is (a unique) $q_i\in D_i$
such that $Y(q_i) = p$. So
$Y^{-1}(p)\supset\{q_1,\ldots,q_k\}$ and $\# Y^{-1}(p)\geq
k$. Since $Y$ is a local diffeomorphism
$Y^{-1}(p)=\{q_1,\ldots,q_k\}$, otherwise elements
in $\R^n\setminus S_Y$ sufficiently near to $p$ would have more
than $k$ points at its pre-image. Therefore, $\# Y^{-1}(p)=k$
and, since $p$ is an arbitrary point of $S_Y$, we can
conclude that the function $\# Y^{-1}(.)$ is locally constant
at $p\in S_Y$, which is a contradiction with Corollary
\ref{cor:localconstante}. This contradiction proves that
$S_Y=\vazio$ and so the theorem.
\end{dem}

Gutierrez and Maquera proved that if $Y:\R^3\to\R^3$ is
a polynomial map such $\spec(Y)\cap[0,\eps)=\vazio$, for some
$\eps>0$, and $\codim(S_Y)\geq2$, then $Y$ is bijective (see \cite{GM}, Theorem 1.3). The following corollaries are somewhat slight generalizations of that result for semialgebraic (not necessarily polynomial) maps $Y:\R^3\rightarrow\R^3.$

\begin{cor}Let $Y:\R^3\to\R^3$ be a $C^2$ semialgebraic (not necessarily polynomial) map such that $\spec(Y)\cap(-\eps,\eps)=\vazio$, for some
$\eps>0$. If  $\codim(S_Y)\geq2$, then $Y$ is bijective.
\end{cor}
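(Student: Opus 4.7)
The strategy is to derive this corollary from Theorem B; accordingly, I must verify that $Y$ is a $C^2$ semialgebraic local diffeomorphism and that, for each $i\in\{1,2,3\}$, every leaf of $\F_i$ is simply connected. The first condition is immediate: since $\spec(Y)\cap(-\eps,\eps)=\vazio$ excludes $0$, the derivative $DY_p$ is invertible at every $p\in\R^3$, so $Y$ is a $C^2$ semialgebraic local diffeomorphism.

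For the second condition, the plan is to follow the approach of Gutierrez-Maquera \cite{GM}, who proved precisely this simply-connectedness under an analogous spectral hypothesis in the polynomial setting. Fix a leaf $L$ of $\F_i$ and consider the restricted map $g=(f_j,f_k)|_L:L\to\R^2$, where $\{j,k\}=\{1,2,3\}\setminus\{i\}$; this is a $C^2$ local diffeomorphism from the abstract $2$-manifold $L$ into $\R^2$. Combining the spectral hypothesis, a $2$-dimensional Fernandes-Gutierrez-Rabanal type theorem \cite{FGR}, and the non-existence of compact leaves for the $1$-dimensional sub-foliations of $L$ (Proposition \ref{pro:compact}), one concludes that $g$ is a diffeomorphism onto $\R^2$, so $L$ is homeomorphic to $\R^2$ and in particular simply connected. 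The polynomial-only inputs of \cite{GM}---notably the finiteness bound in Theorem \ref{teo:finite} and the algebraicity of $S_Y$---would be replaced by their semialgebraic counterparts, namely Corollary \ref{cor:finitesa} and Proposition \ref{prop:sa}; the remaining foliation-theoretic arguments are $C^2$ in nature and transfer verbatim. Once simple-connectedness of the leaves of $\F_i$ is established, Theorem B yields bijectivity of $Y$.

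The main obstacle will be the careful transfer of the spectral hypothesis on $Y$ to a usable condition on the restricted map $g$. Relative to the splitting $T_p\R^3=T_pL\oplus(T_pL)^\perp$, the matrix $DY_p$ assumes a block-triangular form whose $2\times 2$ diagonal block represents $Dg_p$ in a basis of $T_pL$; however, since this passage is not an orthogonal similarity of $\R^3$ as a whole, eigenvalues are not literally preserved, and the symmetric gap $(-\eps,\eps)$ in the hypothesis (rather than the one-sided gap $[0,\eps)$ used in \cite{GM}) is what ultimately allows the spectral information to propagate smoothly to $g$. Once this technical point is handled, the remainder of the argument is a routine adaptation of \cite{GM} to the semialgebraic category, concluded by invoking Theorem B.
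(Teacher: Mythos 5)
Your reduction to Theorem B is exactly the paper's strategy, and your observation that the only thing to check is simple connectedness of the leaves of $\F_i$ is correct (for $n=3$ the relevant foliations $\F_{i_1\ldots i_{n-2}}$ are just the $\F_i$). But the way you propose to establish that input contains a genuine gap. The paper does not re-derive the planarity of the foliations at all: it simply cites Theorem 1.1 of \cite{GM}, which is stated for arbitrary $C^2$ maps $Y:\R^3\to\R^3$ satisfying the spectral condition -- polynomiality plays no role in that statement -- so it applies verbatim to a $C^2$ semialgebraic map, and the corollary is a two-line consequence. You instead attempt to reprove that result by restricting $(f_j,f_k)$ to a leaf $L$ and invoking an FGR-type theorem, and you explicitly defer the crucial step (``once this technical point is handled\ldots''). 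That deferred step is not a routine technicality: $Dg_p:T_pL\to\R^2$ is a linear map between \emph{different} two-dimensional spaces, so its ``spectrum'' is not defined until you fix an identification, and no argument is given that any choice of identification inherits a spectral gap from $DY_p$. Moreover, the Fernandes--Gutierrez--Rabanal theorem is a statement about maps of $\R^2$ into $\R^2$; to apply it to $g$ you would already need to know $L\cong\R^2$, which is precisely what you are trying to prove, so the argument as sketched is circular.

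The fix is simple and is what the paper does: do not adapt the proof of \cite{GM}, just quote its Theorem 1.1, whose hypotheses ($C^2$ regularity plus the spectral condition, here even in the stronger symmetric form $\spec(Y)\cap(-\eps,\eps)=\vazio$) are met. Your remarks about replacing Theorem \ref{teo:finite} by Corollary \ref{cor:finitesa} and Proposition \ref{prop:sa} are also unnecessary for this corollary, since those semialgebraic substitutes are already built into the proof of Theorem B itself and are not needed to verify its hypotheses.
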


\begin{proof} Let $Y=(f_1,f_2,f_3):\R^3\to\R^3$ be a $C^2$ semialgebraic (not necessarily polynomial) map such that $\spec(Y)\cap(-\eps,\eps)=\vazio$, for some
$\eps>0$. It comes from Theorem 1.1 of \cite{GM} that the leaves of $\F_{i}$, $i=1,2,3$ are simply connected. So, by using Theorem B we get $Y$ is bijective.
\end{proof}

If the assumptions of Corollary 4.10 are relaxed to the existence of $\eps >0$ such
that $\spec (Y)$ does not intersect $[0,\eps)$, we cannot hope that the semialgebraic map $Y:\R^3\to\R^3$ should be  a surjective map. In fact, let $Y:\R^3\to\R^3$ be defined in the following way:
$$Y(x,y,z)=(\sqrt{1+x^2}-x,\sqrt{1+y^2}-y, z(1+x^2)(1+y^2)).$$ We have that $Y$ is a smooth semialgebraic map,  $\codim(S_Y)=2$, $\spec(Y)\cap [0,1)=\vazio$ and $Y$ is not surjective.

\begin{cor}Let $Y:\R^3\to\R^3$ be a $C^2$ semialgebraic (not necessarily polynomial) map such that $\spec(Y)\cap[0,\eps)=\vazio$, for some
$\eps>0$. If  $\codim(S_Y)\geq2$, then $Y$ is injective.
\end{cor}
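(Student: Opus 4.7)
Plan:

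My plan is to reduce Corollary 4.11 to an application of Theorem B. The codimension hypothesis $\codim(S_Y)\geq 2$ is immediate from the assumptions, so what must be verified is the remaining hypothesis of Theorem B: that each leaf of $\F_i$ is simply connected, for every $i\in\{1,2,3\}$. I would derive this from the spectral hypothesis $\spec(Y)\cap[0,\eps)=\vazio$, arguing in close analogy with Corollary 4.10.

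On a given leaf $L$ of $\F_i$, the restriction $\tilde Y:=(f_j|_L,f_k|_L):L\to\R^2$, with $\{i,j,k\}=\{1,2,3\}$, is a $C^2$ local diffeomorphism between $2$-manifolds. Working in coordinates on $L$ adapted to the splitting $T_p\R^3=T_pL\oplus\langle\nabla f_i(p)\rangle$, the one-sided spectral condition on $DY_p$ should transfer to the $2\times 2$ block associated to $T_pL$, yielding $\spec(\tilde Y)\cap[0,\eps')=\vazio$ for some $\eps'>0$. Once this is in hand, the Fernandes--Gutierrez--Rabanal injectivity theorem \cite{FGR} forces $\tilde Y$ to be injective, so $L$ embeds as an open subset of $\R^2$; since Proposition~\ref{pro:compact} rules out compact leaves, $L$ is diffeomorphic to $\R^2$ and, in particular, simply connected. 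This plays the role that Theorem~1.1 of \cite{GM} played in Corollary 4.10, but now under the weaker one-sided spectral hypothesis.

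With the simple connectedness of the leaves of each $\F_i$ established, Theorem B applies directly and shows that $Y$ is bijective; in particular, $Y$ is injective, yielding the corollary. The statement is phrased only as injectivity, reflecting that in the semialgebraic setting one does not in general expect surjectivity, but the argument outlined above actually produces the stronger bijective conclusion from the codimension-$2$ hypothesis.

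The main obstacle I anticipate is the spectral transfer from $DY_p:\R^3\to\R^3$ to $D\tilde Y_p:T_pL\to\R^2$. The exclusion $\spec(Y)\cap[0,\eps)=\vazio$ is a condition on the characteristic polynomial of the full $3\times 3$ derivative, and it does not descend to an arbitrary $2\times 2$ restriction in a purely formal way. A careful choice of basis adapted to $T_p\R^3=T_pL\oplus\langle\nabla f_i(p)\rangle$ is needed to control the eigenvalues of the induced $2\times 2$ block, and one must ensure that the shrunk excluded interval $[0,\eps')$ can be chosen uniformly in $p\in L$. This linear-algebraic step is the technical heart of the argument, mirroring (with only one-sided information) the corresponding step used in the proof of the $\R^3$ result of \cite{GM}.
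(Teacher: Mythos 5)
Your plan takes a genuinely different route from the paper's, and as it stands it has gaps at its core. The paper does not try to verify the simple-connectivity hypothesis of Theorem B for $Y$ itself under the one-sided condition; it deforms $Y$ to $Y_t(p)=Y(p)+tp$ (the sign should really be $-tp$, so that the shifted real eigenvalues avoid a symmetric interval $(-a,a)$), applies the preceding corollary --- whose two-sided hypothesis $\spec(Y)\cap(-\eps,\eps)=\vazio$ is exactly what Theorem 1.1 of \cite{GM} needs in order to conclude that the leaves of $\F_i$ are simply connected --- to get that every $Y_t$, $0<t<\eps$, is bijective, and then deduces injectivity of $Y$. Your route would instead require the implication ``$\spec(Y)\cap[0,\eps)=\vazio$ implies each leaf of $\F_i$ is simply connected,'' and that is precisely where the proposal breaks down.

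Concretely: (i) the ``spectral transfer'' you defer is not a technical detail but the whole difficulty, and it is not available as a piece of linear algebra --- the real eigenvalues of a $3\times3$ matrix do not control the eigenvalues of the $2\times2$ block induced on a $2$-plane in an adapted basis, which is why \cite{GM} proves planarity of the leaves by foliation-theoretic arguments under the two-sided hypothesis rather than by restricting the spectrum; (ii) invoking the injectivity theorem of \cite{FGR} for $\tilde Y\colon L\to\R^2$ is circular, since that theorem concerns maps defined on $\R^2$ and you do not yet know $L\cong\R^2$; (iii) even granting injectivity of $\tilde Y$, a noncompact connected open subset of $\R^2$ need not be simply connected (an annulus), so Proposition \ref{pro:compact} does not close the argument; and (iv) your closing claim that the argument ``actually produces bijectivity'' should have been a red flag: the example $Y(x,y,z)=(\sqrt{1+x^2}-x,\sqrt{1+y^2}-y,z(1+x^2)(1+y^2))$ placed immediately before this corollary is there precisely to show that, under the one-sided spectral condition and $\codim(S_Y)\geq 2$ as the paper asserts for it, surjectivity fails. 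If your outline worked, Theorem B would force that example to be surjective; this is why the statement is deliberately weakened to injectivity and why the proof must pass through the perturbed maps $Y_t$ rather than through Theorem B applied to $Y$ itself.
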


\begin{proof} Let $Y:\R^3\to\R^3$ be a $C^2$ semialgebraic (not necessarily polynomial) map such that $\spec(Y)\cap(-\eps,\eps)=\vazio$, for some
$\eps>0$. Let us denote $Y_t(p)=Y(p)+tp$. Given $0<t<\eps$, we have that $\spec(Y_t)\cap(-a,a)=\vazio$ where $0<a<\min\{t,\eps-t\}$. It comes from above corollary that $Y_t$ is bijective for all $0<t<\eps$, hence $Y$ is injective.
\end{proof}

\section{Final comments}

\vspace{.3cm} \noindent
 {\bf I. The (complex) Jacobian Conjecture}: To prove that his conjecture implies
 the Jacobian conjecture Jelonek associated in a standard way a map
$Y_{\C}:\C^n\to\C^n$ to a $Y_{\R}:\R^{2n}\to\R^{2n}$ and
noted that when the complex codimension of $S_Y$ is 1 the
real is 2. Therefore our result establishes a new
characterization to the Jacobian Conjecture. In fact, with
our result to prove the (complex) Jacobian Conjecture it is
enough to check if the leaves of $\F_{i_1\ldots i_{2n-2}}$,
related with $Y_\R$, are simply connected.

 \vspace{.3cm}
 \noindent
 {\bf II. } Observe that a polynomial mapping $Y:\C^n\to\C^n$ has
Jacobian determinant nonzero everywhere if, and only if, it
is constant nonzero. So the Jacobian Conjecture can be
formulated in the real context as follows:

\textbf{Real Jacobian Conjecture:} If $Y:\R^n\to\R^n$ is a
polynomial mapping with constant nonzero Jacobian determinant
then $Y$ is bijective.

For the other hand, consider the following reduction result
given by Bass, Connell and Wright in \cite{bcw}:

\begin{teo}\label{teo:reduction}
If the (real or complex) Jacobian Conjecture holds for all
$n\geq2$ and all polynomial mappings of the form $I+H$, where
$I$ is the identity and $H$ is cubic homogeneous, then the
(real or complex) Jacobian Conjecture holds.
\end{teo}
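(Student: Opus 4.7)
The plan is to prove the implication by contraposition: assume the (real or complex) Jacobian Conjecture for every polynomial map of the form $I+H$ with $H$ cubic homogeneous, in every dimension $n\geq 2$, and derive the conjecture in full generality. The strategy is the classical dimension-enlarging reduction, carried out in three successive stages, each of which converts a polynomial map with constant nonzero Jacobian into a new polynomial map with constant nonzero Jacobian, possibly in higher dimension and closer to the target form, in such a way that bijectivity of the new map forces bijectivity of the original.

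\emph{Stage 1 (affine normalization).} Given $F:\mathbb{K}^n\to\mathbb{K}^n$ polynomial with $\det DF\equiv c\neq 0$, one rescales a single coordinate so that $\det DF\equiv 1$, and then replaces $F$ by $(DF(0))^{-1}(F-F(0))$ to obtain $F(0)=0$ and $DF(0)=I$. The normalized map has the form
\[F=I+H_2+H_3+\cdots+H_d,\]
with each $H_k$ homogeneous of degree $k$ and $d=\deg F$. Affine changes of coordinates clearly preserve both injectivity and surjectivity.

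\emph{Stages 2 and 3 (homogenization and cubic reduction).} One introduces a block of auxiliary variables $y\in\mathbb{K}^m$ and constructs a map $\widetilde F:\mathbb{K}^{n+m}\to\mathbb{K}^{n+m}$ of the form
\[\widetilde F(x,y)=\bigl(x+G_1(x,y),\ y+G_2(x,y)\bigr)\]
whose non-identity part is a single homogeneous piece of some degree $d'$, by substituting a new variable for each intermediate homogeneous component $H_k$ and encoding those substitutions in the $y$-block. Then, iterating a similar trick, one writes any degree $d'>3$ monomial as a cubic expression in a further enlarged set of variables, with the substitutions once again encoded in new blocks of equations, so that the final non-identity part is homogeneous of degree exactly $3$. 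At each stage the new blocks of equations are arranged in a block-triangular way, so the Jacobian matrix is triangular with unit determinant on the new blocks and hence the global Jacobian determinant remains the constant $1$; moreover, $F$ is recoverable from $\widetilde F$ (or its inverse) by restriction to the affine slice $\{y=0\}$, so a global inverse for $\widetilde F$ produces a global inverse for $F$.

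\emph{Conclusion and main obstacle.} Applying the hypothesis of the theorem to the final cubic homogeneous map in the chain yields its bijectivity; unwinding the reductions then gives bijectivity of the original $F$. The main technical obstacle is the simultaneous bookkeeping required to verify, at every stage of the dimension-enlargement, both (i) that the Jacobian determinant stays constant and nonzero, which is where the block-triangular structure of the added equations is essential, and (ii) that the inverse in the enlarged dimension restricts correctly to an inverse of $F$ in the original dimension, which requires ensuring that the slice $\{y=0\}$ is mapped by $\widetilde F$ bijectively onto a slice isomorphic to $\mathbb{K}^n$ that can be identified with the image of $F$.
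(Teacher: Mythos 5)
First, a point of comparison: the paper does not prove this statement at all. Theorem \ref{teo:reduction} is quoted from Bass--Connell--Wright \cite{bcw} as background for the final comments, so there is no in-paper argument to measure yours against; your proposal has to stand on its own as a proof of the Bass--Connell--Wright reduction, and it follows the only known strategy (affine normalization, then dimension-raising substitutions, then homogenization).

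The difficulty is that the outline has a genuine gap exactly where the theorem has content. You justify preservation of the constant-Jacobian hypothesis by saying the new equations are ``arranged in a block-triangular way, so the Jacobian matrix is triangular.'' For the substitution scheme you describe --- rewrite high-degree monomials of $H$ using new variables $y$ and append defining equations expressing the $y$'s in terms of the $x$'s --- this is false: the modified first block of $\widetilde F$ depends on $y$ (so the upper-right block of $D\widetilde F$ is nonzero) while the appended equations depend on $x$ (so the lower-left block is nonzero); hence $D\widetilde F$ is not block-triangular and its determinant is not the product of the diagonal blocks. In the actual argument of \cite{bcw} (and in van den Essen's streamlined version) the determinant is controlled not by inspecting $D\widetilde F$ directly but by exhibiting $\widetilde F$ as a composition $\tau\circ(F\times\mathrm{id})\circ\sigma$ with $\tau,\sigma$ elementary unipotent automorphisms chosen so that the unwanted monomials cancel; this simultaneously yields $\det D\widetilde F=\det DF$ and the equivalence of invertibility of $\widetilde F$ with that of $F$, and it is also what replaces your unjustified claim that $F$ is recoverable by restricting $\widetilde F$ to the slice $\{y=0\}$ (that slice is not respected by the construction). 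A second gap: reaching a map whose nonlinear part is \emph{homogeneous} of degree exactly $3$ is not just a matter of splitting monomials of degree $>3$; the surviving quadratic terms must be promoted to cubic ones, and the natural weighting $H_k\mapsto t^{3-k}H_k$ must itself be shown to keep the Jacobian determinant constant, which does not follow formally from $\det(I+JH_2+JH_3)\equiv 1$. Without these verifications the proposal asserts the reduction rather than proving it; if the determinant bookkeeping were as immediate as claimed, the theorem would be a triviality rather than the substantial result it is.
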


Furthermore, Hubbers classified in \cite{H} all polynomial
local diffeomorphisms of the form $I+H$ above in dimension
four. His classification provided eight such mappings up to
linear conjugations. We were surprised to see that our
additional hypothesis on the foliations $\F_{i_1i_2}$ was satisfied for all this eight
mappings.

Moreover, Dru\.{z}kowski in \cite{D} sharped Theorem
\ref{teo:reduction} proving that it suffices to prove the
Jacobian Conjecture for a special class of cubic homogeneous
mappings: the cubic-linear mappings.

\begin{teo}\label{teo:reductionlinear}
It suffices to prove the (real or complex) Jacobian
Conjecture for all $n\geq2$ and all polynomial mappings of
the form $Y=(p_1+l_1^3,\ldots,p_n+l_n^3)$, where
$l_j=a_{1j}p_1+\cdots+a_{nj}p_n$, for all $j$.
\end{teo}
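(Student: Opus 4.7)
The plan is to chain Theorem \ref{teo:reduction} of Bass-Connell-Wright with an algebraic reduction that embeds any cubic homogeneous Keller map into a cubic-linear one of larger dimension. By Theorem \ref{teo:reduction} it suffices to consider $Y = I + H$ with $H=(H_1,\ldots,H_n)$ cubic homogeneous and $\det(DY)\equiv 1$; in this setting the Jacobian hypothesis is well-known to be equivalent to the nilpotency of the matrix $DH(X)$ for every $X$. So the remaining task is to exhibit the cubic homogeneous case as a consequence of the cubic-linear case.

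The first ingredient is a polarization identity over a field of characteristic zero, which lets me express every cubic monomial, and hence every $H_i$, as a finite linear combination of cubes of linear forms. For instance,
$$6\, abc = (a+b+c)^3 - (a+b)^3 - (a+c)^3 - (b+c)^3 + a^3 + b^3 + c^3,$$
with a similar formula for $a^2 b$. After applying this to each monomial of each $H_i$ and collecting, one obtains expressions of the form $H_i(X) = \sum_{k=1}^{N} \lambda_{ik}\,\ell_k(X)^3$ where the $\ell_k$ form a common list of linear forms in $X$. Since both $\R$ and $\C$ admit cube roots, each coefficient $\lambda_{ik}$ may be absorbed by replacing $\ell_k$ with $\lambda_{ik}^{1/3}\ell_k$, so that each $H_i$ becomes a plain sum of cubes of linear forms.

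The second ingredient is a dimension-extension trick. One introduces a new variable $Z_{ik}$ for each pure-cube summand above, and constructs a map $\tilde{Y}:\R^M\to\R^M$, with $M = n + \sum_i N_i$, every coordinate of which has the required shape $p_j + l_j^3$, and such that along an $n$-dimensional affine slice $\Sigma = \{Z_{ik} = \lambda_{ik}^{1/3}\ell_k(X)\}$ invariant under $\tilde Y$ the map $\tilde Y$ restricts to $Y$. A direct block-triangular computation shows $\det D\tilde Y \equiv \det DY \equiv 1$, so $\tilde Y$ is Keller. Then bijectivity of $\tilde Y$ as a polynomial self-map of $\R^M$ (or $\C^M$) transfers to $Y$ through the diffeomorphism $\Sigma \cong \R^n$.

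The main obstacle is the explicit design of the $M-n$ auxiliary coordinates of $\tilde Y$ so that each is literally of the single-cube form $p_j + l_j^3$ (rather than a sum of cubes) while still enforcing the invariance of $\Sigma$; the first $n$ coordinates must likewise be reshaped into single-cube form, which is what compels the enlargement of dimension above and an iterative substitution in which each $\lambda_{ik}\ell_k^3$ is carried by its own new coordinate. This bookkeeping is the technical heart of Dru\.{z}kowski's argument in \cite{D}. Once it is in place, the Jacobian Conjecture for all cubic-linear $\tilde Y$ implies it for all cubic homogeneous $Y$, and combining with Theorem \ref{teo:reduction} yields the stated reduction.
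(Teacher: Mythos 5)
The paper offers no proof of this statement at all: Theorem \ref{teo:reductionlinear} is quoted verbatim as Dru\.{z}kowski's result from \cite{D}, so there is nothing internal to compare your argument against. Judged on its own terms, your sketch correctly identifies the two standard ingredients of Dru\.{z}kowski's reduction --- polarization of cubic forms into sums of cubes of linear forms, followed by a stabilization (dimension extension) that trades sums of cubes for single cubes of linear forms in more variables --- but it has a genuine gap exactly where you say it does. The ``explicit design of the $M-n$ auxiliary coordinates'' is not bookkeeping that can be waved at: it is the entire content of the theorem, and you defer it wholesale to \cite{D}. As written, a proof that says ``the technical heart is in the reference'' is a citation, not a proof.

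Moreover, the construction as you describe it does not produce a cubic-linear map. If you introduce coordinates $Z_{ik}$ and put the cube $\ell_k(X)^3$ into the new coordinates, the first $n$ coordinates become $X_i+\sum_k Z_{ik}^3$, which is a sum of cubes of several distinct coordinates and is \emph{not} of the form $p_j+l_j^3$ for a single linear form $l_j$ (note $(\sum_k Z_{ik})^3\neq\sum_k Z_{ik}^3$). Fixing this requires the iterative substitution you allude to but do not carry out, and after that the claimed ``direct block-triangular computation'' of $\det D\tilde Y$ is no longer obvious, since the coupling between old and new variables runs in both directions; verifying that the extended map is still Keller is itself a nontrivial step in \cite{D}. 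Finally, the transfer of invertibility from $\tilde Y$ back to $Y$ via an invariant slice $\Sigma$ needs care: invariance $\tilde Y(\Sigma)\subseteq\Sigma$ plus bijectivity of $\tilde Y$ gives injectivity of $Y$ directly, but surjectivity (and polynomiality of the inverse, which is what the Jacobian Conjecture asserts) requires an extra word --- e.g.\ Theorem \ref{teo:BR} in the real case, or the fact that injective polynomial self-maps of $\C^n$ are automorphisms in the complex case. None of these points is fatal to the strategy, which is indeed Dru\.{z}kowski's, but each must be supplied before this counts as a proof.
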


In \cite{H} Hubbers established the injectivity to
cubic-linear maps until dimension $n=7$. Hence, by Remark
\ref{rem:necessary}, we may conclude that to these cases our
additional condition on the coordinate foliations is also
satisfied.

So we can ask if such condition on these foliations holds for
cubic-linear maps on $\R^n$ with $n>7$.

\end{document}